\algnewcommand{\LineComment}[1]{\State \(\triangleright\) #1}
\newcommand{\RR}{\ensuremath{\mathbb{R}}}
\newcommand{\FF}{\ensuremath{\mathbb{F}}}
\newcommand{\QQ}{\ensuremath{\mathbb{Q}}}
\newcommand{\ZZ}{\ensuremath{\mathbb{Z}}}
\newtheorem{theorem}{Theorem}
\newtheorem{corollary}{Corollary}
\newtheorem{lemma}{Lemma}
\newtheorem{problem}{Problem}
\newtheorem{definition}{Definition}
\newtheorem{example}{Example}
\def\QuotS#1#2{\leavevmode\kern-.0em\raise.2ex\hbox{$#1$}\kern-.1em/\kern-.1em\lower.25ex\hbox{$#2$}}
\renewcommand{\S}{\ensuremath{\mathcal{S}}}
\renewcommand{\P}{\ensuremath{\mathcal{P}}}
\newcommand{\cG}{\ensuremath{\mathcal{G}}}
\newcommand{\lat}{\ensuremath{\mathcal{L}}}
\DeclareMathOperator{\Aut}{Aut}
\DeclareMathOperator{\Sym}{Sym}
\DeclareMathOperator{\GL}{GL}
\DeclareMathOperator{\cO}{\mathcal{O}}
\DeclareMathOperator{\cF}{\mathcal{F}}
\DeclareMathOperator{\spa}{Span}
\DeclareMathOperator{\Stab}{Stab}
\DeclareMathOperator{\Min}{Min}
\DeclareMathOperator{\Tr}{Tr}
\DeclareMathOperator{\Vol}{Vol}
\DeclareMathOperator{\Vor}{Vor}
\newcommand{\norm}[1]{\lVert #1 \rVert}
\newcommand{\ball}{\mathcal{B}}
\DeclareMathOperator{\Comb}{Comb}
\DeclareMathOperator{\Lin}{RLin}
\newcommand{\Alat}{\mathsf{A}}
\newcommand{\Dlat}{\mathsf{D}}
\newcommand{\Elat}{\mathsf{E}}
\newcommand{\Lambdalat}{\mathsf{\Lambda}}
\newcommand{\kappalat}{\mathsf{\kappa}}
\newcommand{\cpp}{{\tt C++}}
\newcommand{\cc}{{\tt C}}
\author{Mathieu Dutour Sikiri\'c}
\address{MSM programming, Karlova\v cka Cesta 28B, 10450 Jastrebarsko, Croatia}
\email{mathieu.dutour@gmail.com}
\author{Wessel van Woerden}
\address{PQShield, Amsterdam, The Netherlands}
\email{wessel.vanwoerden@pqshield.com}
\title{The lattice packing problem in dimension 9 by Voronoi's algorithm}
\date{}
\begin{document}

\begin{abstract}
In 1908, Voronoi introduced an algorithm that solves the lattice packing problem in any dimension in finite time. Voronoi showed that any lattice with optimal packing density must be a so-called perfect lattice, and his algorithm enumerates the finitely many perfect lattices up to similarity in a fixed dimension. However, due to the high complexity of the algorithm this enumeration had, until now, only been completed up to dimension 8.

In this work we compute all \numprint{2237251040} perfect lattices in dimension 9 via Voronoi's algorithm. As a corollary, this shows that the laminated lattice $\Lambdalat_9$ gives the densest lattice packing in dimension 9. Equivalently, we show that the Hermite constant $\gamma_9$ in dimension 9 equals $2$.
Furthermore, we extend a result by Watson (1971) and show that the set of possible kissing numbers in dimension 9 is precisely $2 \cdot \{ 1, \ldots, 91, 99, 120, \ldots, 129, 136 \}$.
\end{abstract}

\maketitle

\begin{center}
  \emph{In memory of Jacques Martinet (1939–2026).}
\end{center}

\section{Introduction}
The sphere packing problem is a classical and notoriously hard problem.
It asks how to pack equal $n$-dimensional balls in $\RR^n$ such that their density is maximized.
The sphere packing problem has only been solved in dimensions $1,2,3,8$ and $24$, the last two of which only recently~\cite{viazovska2017sphere,cohn2017sphere}.
Johannes Kepler conjectured already in the 17th century that in dimension $3$ no packing could be denser than the face-centered cubic or hexagonal close packings.
A testament of the hardness of this problem is given by the fact that Kepler's conjecture stayed unproven for almost $400$ years.
Until $1998$, when Thomas Hales announced a computational proof to the sphere packing problem in dimension $3$~\cite{hales2005kepler} which was later formalized with proof checking software in~\cite{hales2017formal}.

One obstacle that makes the sphere packing problem hard is that a priori there are no constraints on the configurations optimal sphere packings attain.
In particular it is not even clear that they form any regular pattern.
In contrast, the optimal sphere packings in dimension $1,2,3,8$ and $24$ are all very regular.
In fact, for these cases we can pick the centers of the balls to form a (discrete) additive group, i.e., a lattice.
A natural restriction of the problem is thus to look for optimal \emph{lattice packings}, also known as the lattice packing problem.

And indeed, this restriction allows to solve the problem in all dimensions $1$ to $8$ and $24$.
Contrary to Kepler's conjecture the lattice packing problem in dimension $3$ was already solved by Gauss in $1831$~\cite{gauss1831besprechung}.
Dimensions $4$ and $5$ followed in $1877$ by Korkine and Zolotarev's reduction theory~\cite{korkine1877formes}, and in $1935$ Blichfeldt used the same theory to determine the optimal lattice packings in dimensions $6, 7$ and $8$ \cite{blichfeldt1935minimum}.
The famous Leech lattice was shown to be optimal in dimension $24$ by Cohn and Kumar~\cite{cohn2009leechoptimality} using linear programming bounds, a technique which also played an important role in solving the general sphere packing problem in dimensions $8$ and $24$.
The optimality in all these dimensions was shown by strong theoretical arguments and limited computations.
However, the lattice packing problem in dimensions $9$ and beyond (except for $24$) so far seems to evade any of the theoretical approaches that worked for the other dimensions. In particular, all known density upper bounds are relatively far from the best known (lattice) packings in these dimensions.

Almost $100$ years before the computational proof of the Kepler conjecture, Voronoi, ahead of his time, already gave a computational approach to solve the lattice packing problem in any fixed dimension~\cite{VoronoiI}.
Voronoi showed that any optimal lattice packing must be a \emph{perfect lattice}, and that there are only a finite number of non-similar perfect lattices in any given dimension.
Furthermore, he presented an algorithm that provably enumerates all perfect lattices, thereby giving a computational way to solve the lattice packing problem.

\begin{table}
\centering
\caption{Number of non-similar perfect lattices.}\label{tab:nbforms}
  \begin{tabular}{rrl}\toprule
d & \# Perfect lattices & Authors \\ \midrule
2 & \numprint{1} & Lagrange, 1770~\cite{de1770demonstration} \\
3 & \numprint{1} & Gauss, 1831~\cite{gauss1831besprechung} \\
4 & \numprint{2} & Korkine \& Zolotarev, 1877~\cite{korkine1877formes} \\
5 & \numprint{3} & Korkine \& Zolotarev, 1877~\cite{korkine1877formes} \\
6 & \numprint{7} & Barnes, 1957~\cite{barnes1957perfect} \\
7 & \numprint{33} & Jaquet-Chiffelle, 1993~\cite{jaquet1993enumeration} \\
8 & \numprint{10916} & Dutour Sikiri\'c, Sch{\"u}rmann \&   \\
9 &  $\geq $ \numprint{500000} & Vallentin, 2005~\cite{PerfectDim8} \\
&  $\geq$ \numprint{23000000} & van Woerden, 2018~\cite{vanWoerdenMasterThesis} \\
&  \numprint{2237251040} & \textbf{this work} \\ \bottomrule
\end{tabular}
\end{table}

So far, this computational approach to solve the lattice packing problem, has always fallen behind the theoretical results, mainly due to its high complexity.
For example, the enumeration of perfect lattices in dimension $6$ was done by Barnes in $1957$~\cite{barnes1957perfect}, $22$ years after Blichfeldt's proof. And dimensions $7$ and $8$ followed, only in $1993$ by Jaquet~\cite{jaquet1993enumeration} and in $2007$ by Dutour Sikiri\'c, Sch{\"u}rmann and Vallentin~\cite{PerfectDim8} respectively.
Dimension $9$ was also attempted, and in~\cite{PerfectDim8} and~\cite{AnzinEnumeration} about \numprint{500000} perfect lattices were found.
In~\cite{vanWoerdenMasterThesis} some additional algorithmic improvements lead to a total of \numprint{23000000} perfect lattices to be found.
Unfortunately, even with such a large number there was still no end in sight during the enumeration, and provably finding all cases requires a significant amount of extra computation, making it unclear if dimension $9$ would be feasible.

In this work we show that it is.
By many algorithmic, implementation and scalability improvements we present in this paper the full enumeration of all perfect lattices in dimension $9$, and as a by-product, we show that the laminated lattice $\Lambdalat_9$ is the densest lattice packing in dimension $9$.

\begin{theorem}\label{thm:perfect_and_extreme_forms}
  There are \numprint{2237251040} perfect and \numprint{7338582} extreme lattices in dimension $9$, of which the laminated lattice $\Lambdalat_9$ is the densest.
\end{theorem}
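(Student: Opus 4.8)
The plan is to carry out Voronoi's algorithm in dimension $9$ to completeness, producing a certified list of all perfect lattices, and then to extract the extreme ones and the optimal packing density from this list. The backbone is Voronoi's classical theory: perfect lattices correspond (up to similarity, equivalently up to the action of $\GL_9(\ZZ)$ on positive definite quadratic forms) to the vertices of the Ryshkov polyhedron, and the edges of that polyhedron connect a perfect form to its neighbors obtained by the flipping/contiguity operation across the facets of the Voronoi domain. So the algorithm is a graph traversal: start from a known perfect form (e.g. $\Alat_9$ or $\Dlat_9$ or $\Elat_8 \oplus \Alat_1$), and repeatedly, for each newly discovered perfect form, (i) compute its set of minimal vectors, (ii) determine the facets of the corresponding Voronoi/perfect domain cone by a dual-description (polyhedral) computation, (iii) for each facet perform the Voronoi flip to obtain a neighboring perfect form, and (iv) test each neighbor for $\GL_9(\ZZ)$-equivalence against the forms already in the list. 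The enumeration terminates precisely when no new equivalence classes appear, and Voronoi's theorem guarantees this happens after finitely many steps and that the output is exhaustive.

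The main steps, in order, are: first, fix a robust starting form and a canonical form / invariant for the isomorphism test (the hard combinatorial primitive here is deciding $\GL_9(\ZZ)$-equivalence of two perfect forms, for which one reduces to a graph-isomorphism problem on the minimal-vector configuration and applies a canonical-labeling routine such as \texttt{nauty}/\texttt{bliss}, after separating by cheap invariants like determinant, kissing number, and the distribution of inner products among minimal vectors). Second, implement the dual-description step computing the facets of each perfect cone; this is the polyhedral bottleneck, since some perfect domains in dimension $9$ have enormous numbers of minimal vectors and correspondingly many facets, so one needs recursive/incremental dual-description methods, symmetry exploitation via $\Aut$ of the form, and possibly GPU/parallel linear algebra. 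Third, run the traversal on a distributed cluster, using a shared deduplicated database of discovered classes, with careful bookkeeping so that the traversal front is eventually emptied; the termination certificate is that every facet of every enumerated form flips to a form equivalent to one already in the list. Fourth, from the completed list of \numprint{2237251040} perfect forms, compute for each one whether it is extreme by checking eutaxy (solve a linear program / feasibility problem asking whether the form lies in the interior of the cone spanned by the rank-one forms $v v^{\mathsf T}$ over its minimal vectors $v$); this isolates the \numprint{7338582} extreme lattices. Finally, evaluate the packing density (equivalently the Hermite invariant $\min(Q)/\det(Q)^{1/9}$) on the list and observe that the maximum is attained and equals the value for $\Lambdalat_9$, i.e. $\gamma_9 = 2$; since by Voronoi's theorem the optimal lattice packing is perfect, this is a proof that $\Lambdalat_9$ is optimal.

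I expect the dominant obstacle to be \emph{scale and certification simultaneously}: the algorithm is conceptually a finite graph search, but with over two billion vertices the challenges are (a) making each local step — minimal-vector enumeration, dual description of the perfect cone, the flip, and especially the equivalence test — fast and memory-frugal enough to run two billion times, (b) managing a fault-tolerant distributed computation with a deduplicated global table of canonical forms at this size, and (c) guaranteeing correctness of the final claim of completeness, which rests on trusting the equivalence oracle and the polyhedral facet computation on every single form; independent re-verification (recomputing invariants, re-checking that the flip-graph is closed, cross-checking counts and the $\Lambdalat_9$-optimality against theoretical upper bounds) is needed to make the \numprint{2237251040} figure a theorem rather than a heuristic. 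The rest of the paper is devoted to the algorithmic and engineering ideas that make each of these feasible.
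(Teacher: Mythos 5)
Your proposal is correct and takes essentially the same approach as the paper: a complete run of Voronoi's algorithm with canonical-form-based equivalence testing (reduced to graph isomorphism), recursive symmetry-aware dual description for the high-incidence Voronoi domains, distributed/parallel traversal, and an exact linear-programming eutaxy check to isolate the extreme forms, after which $\Lambdalat_9$ is read off as the densest. The paper's proof of the theorem is precisely the completed execution of this strategy, whose ingredients are developed in Sections 4--6.
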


\begin{corollary}\label{cor:lambda9_is_densest}
The laminated lattice $\Lambdalat_9$ is the densest lattice packing in dimension $9$. The Hermite constant $\gamma_9$ equals $2$.
\end{corollary}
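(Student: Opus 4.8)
The plan is to deduce the corollary from Theorem~\ref{thm:perfect_and_extreme_forms} together with Voronoi's classical results recalled in the introduction. Write $\gamma(L)=\min(L)/\det(L)^{1/n}$ for the Hermite invariant of a lattice $L\subset\RR^n$, where $\min(L)$ denotes the squared length of a shortest nonzero vector and $\det(L)$ the Gram determinant; then the lattice packing density of $L$ equals $\Vol(\ball_n)\,\gamma(L)^{n/2}/2^n$, which is a strictly increasing function of $\gamma(L)$. Hence maximizing the lattice packing density in dimension $9$ amounts to maximizing $\gamma$ over all $9$-dimensional lattices. By a classical compactness argument the supremum $\gamma_9$ is attained, and, as recalled above, Voronoi showed that any lattice attaining it must be perfect; so the densest $9$-dimensional lattice packing is realized by one of the (finitely many) perfect lattices.

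I would then invoke Theorem~\ref{thm:perfect_and_extreme_forms}, which supplies the complete list of perfect lattices in dimension $9$ and singles out $\Lambdalat_9$ as the one of maximal packing density, hence of maximal Hermite invariant among them. Combined with the previous paragraph, this shows that $\Lambdalat_9$ attains $\gamma_9$ and the optimal lattice packing density in dimension $9$. It remains only to evaluate $\gamma(\Lambdalat_9)$: normalizing so that $\min(\Lambdalat_9)=4$, an explicit Gram matrix of $\Lambdalat_9$ has determinant $2^{9}$, so $\gamma(\Lambdalat_9)=4/(2^{9})^{1/9}=4/2=2$ and therefore $\gamma_9=2$, the last assertion of the corollary.

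I do not expect a real obstacle in this deduction: it is short, and essentially all of the difficulty of the corollary is absorbed into Theorem~\ref{thm:perfect_and_extreme_forms}, whose proof --- the enumeration of all \numprint{2237251040} perfect lattices and the extraction of their invariants --- is the main content of the paper. The two external inputs, the attainment of $\gamma_9$ and Voronoi's implication ``optimal $\Rightarrow$ perfect'', are classical. The points that do deserve care in the present step are the choice of normalization in the computation of $\gamma(\Lambdalat_9)$, which I would cross-check against the value $2$ long conjectured by Conway and Sloane, and making sure the maximum of $\gamma$ over a list of more than two billion lattices is located reliably --- a finite but nontrivial bookkeeping task that is naturally carried out as part of the enumeration rather than as a separate mathematical argument.
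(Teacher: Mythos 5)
Your proposal is correct and follows essentially the same route as the paper: deduce from the convexity/perfection theory recalled in Sections 3--4 that $\gamma_9$ is attained at a perfect form, appeal to Theorem~\ref{thm:perfect_and_extreme_forms} for the complete enumeration with $\Lambdalat_9$ as densest, and then evaluate $\gamma(\Lambdalat_9)=4/(2^9)^{1/9}=2$ (consistent with the data in Table~\ref{tab:special_forms}). The only cosmetic difference is that the paper points to Lemma~\ref{lem:extreme_eutactic} rather than spelling out the compactness/attainment step, but the content is the same.
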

Given that $\gamma_8 = \gamma_9 = 2$ this also shows that the Hermite constant does not have to be strictly increasing in the dimension.
Furthermore, additional data obtained by our computation allows us to prove precisely how many shortest vectors a lattice in dimension $9$ can have.
\begin{theorem}\label{thm:poss_kissing_numbers}
The set of possible kissing numbers $|\Min(\lat)|$, for a lattice $\lat \subset \RR^9$ of dimension $9$, is $2 \cdot \{ 1, \ldots, 91, 99, 120, \ldots, 129, 136 \}$.
\end{theorem}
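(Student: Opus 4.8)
The plan is to split the possible kissing numbers of a $9$-dimensional lattice $\lat$ according to whether $\Min(\lat)$ spans $\RR^9$. If $\Min(\lat)$ spans only an $m$-dimensional subspace $V$ with $m<9$, then $\Min(\lat)=\Min(\lat\cap V)$, where $\lat\cap V$ is a lattice of rank $m\le 8$; this case contributes only kissing numbers that already occur in dimensions at most $8$, and the complete classification there — going back to Watson for small dimensions and completed using the enumerations of perfect lattices in dimensions up to $8$ — produces only values in $2\cdot\{1,\dots,91,99,120\}$, hence in the claimed set. It therefore suffices to describe the set $\mathcal{K}$ of kissing numbers of lattices $\lat\subset\RR^9$ for which $\Min(\lat)$ spans $\RR^9$, and to prove $\mathcal{K}=2\cdot\{1,\dots,91,99,120,\dots,129,136\}$.

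Normalizing $\min(\lat)=1$, the Gram matrix $Q$ of such a lattice lies in the relative interior of a unique face $F$ of the Ryshkov polyhedron $\mathcal{R}_9=\{Q\succ0: Q[v]\ge1\ \text{for all}\ v\in\ZZ^9\setminus\{0\}\}$, whose vertices are, by Voronoi's theory, precisely the (normalized) perfect forms. The key point is that $F$ is \emph{bounded}: its recession cone consists of the positive semidefinite $N$ with $N[v]=0$ for all $v\in\Min(\lat)$, and since $\Min(\lat)$ spans $\RR^9$ any such $N$ vanishes. Hence $F=\conv(P_1,\dots,P_k)$ for finitely many perfect forms $P_i$, and writing $Q$ as a strict convex combination of the $P_i$ yields $Q[v]\ge1$ with equality exactly when $P_i[v]=1$ for every $i$, i.e.
\[
  \Min(\lat)=\bigcap_{i=1}^{k}\Min(P_i),\qquad |\Min(\lat)|\le\min_{1\le i\le k}|\Min(P_i)|.
\]
Conversely the generic form of any bounded face of $\mathcal{R}_9$ is the Gram matrix of such a lattice, so
\[
  \mathcal{K}=\Bigl\{\bigl|\textstyle\bigcap_i\Min(P_i)\bigr|: P_1,\dots,P_k\ \text{are the vertices of a bounded face of}\ \mathcal{R}_9\ \text{with spanning intersection}\Bigr\}.
\]
In particular $\mathcal{K}$ contains $|\Min(P)|$ for every $9$-dimensional perfect lattice $P$ (the case $k=1$), and all of these are read off from the enumeration in Theorem~\ref{thm:perfect_and_extreme_forms}.

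For the inclusion $\mathcal{K}\supseteq2\cdot\{1,\dots,91,99,120,\dots,129,136\}$ it remains to exhibit, for each listed value, a $9$-dimensional lattice attaining it: the values $2\cdot\{1,\dots,8\}$ already arise from lattices of rank $<9$; for $9\le s\le91$ elementary constructions (orthogonal sums, generic perturbations) together with the sub-configurations produced while running the algorithm supply explicit witnesses; and the sporadic values are realized by lattices recorded in the data, for instance $\Elat_8\perp\Alat_1$ for $2\cdot121$ and $\Lambdalat_9$ for $2\cdot136$. For the reverse inclusion the crucial device is the inequality $|\Min(\lat)|\le\min_i|\Min(P_i)|$ above: if $2s\in\mathcal{K}$ then every perfect vertex $P_i$ of the corresponding face has at least $2s$ minimal vectors. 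Hence, to exclude the values $2\cdot(\{92,\dots,98\}\cup\{100,\dots,119\}\cup\{130,\dots,135\})$, it is enough to restrict attention to the short, explicit list of $9$-dimensional perfect lattices with at least $184$ minimal vectors — a negligible fraction of the \numprint{2237251040} total — enumerate all bounded faces of $\mathcal{R}_9$ spanned by them using the contiguity data already produced by Voronoi's algorithm, and verify that $|\bigcap_i\Min(P_i)|$ never falls in a forbidden range.

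I expect the main obstacle to be the completeness of this last enumeration in the high-kissing-number regime: one must certify not only that the list of $9$-dimensional perfect lattices with many minimal vectors is exhaustive — which rests on the full run behind Theorem~\ref{thm:perfect_and_extreme_forms} — but also that every bounded face of $\mathcal{R}_9$ all of whose vertices lie on that list has been found together with its \emph{complete} vertex set, so that no intersection configuration of a forbidden cardinality is overlooked. The same care, applied dimension by dimension, underlies the input from dimensions $\le8$ invoked at the start.
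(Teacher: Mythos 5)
Your proposal follows essentially the same route as the paper: associate to $Q$ the face $F$ of the Ryshkov polyhedron on which it lies, observe that its incidence is $\frac{1}{2}|\Min(Q)|$, bound that incidence by the (half) kissing number of the perfect-form vertices of $F$, and conclude that any forbidden value in the range $\geq 2\cdot 92$ would force all vertices of $F$ to be among the handful of perfect forms with $\geq 2\cdot 93$ minimal vectors, namely $Q_{99}$, $Q_{129}$ and $Q_{\Lambdalat_9}$. Your characterization $\Min(Q)=\bigcap_i\Min(P_i)$ for $Q$ in the relative interior of a bounded face is the dual way of stating what the paper phrases via non-trivial faces of the tangent cones $\P(Q')$ (equivalently, of the Voronoi domains $\Vor(Q')$); the two formulations are interchangeable. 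Where the paper goes further is in actually carrying out the required verification: it records that the largest facet of $\Vor(Q_{99})$ has incidence $89$; that $\Vor(Q_{129})$ splits as $\Vor(Q_{\Elat_8})$ plus a $9$-ray simplicial cone, so its faces either have incidences $120,\dots,128$ or incidence at most $75+9=84$; and that the unique facet of $\Vor(Q_{\Lambdalat_9})$ of incidence $128$ is combinatorially isomorphic to the corresponding facet of $\Vor(Q_{129})$, while all its other facets have incidence at most $90$. Your outline stops at "enumerate and verify", which is precisely where the content lies; without this concrete face-lattice analysis the argument is not complete. Two smaller slips: your claim that $\mathcal{K}=2\cdot\{1,\dots,91,99,120,\dots,129,136\}$ cannot be literally true, since forms with $\Min$ spanning $\RR^9$ have kissing number at least $2\cdot9$, so $\mathcal{K}$ omits the small values, which you then correctly attribute to sublattices of rank $\leq 8$; and the paper does not split explicitly by spanning, but instead uses that every face of the Ryshkov polyhedron (which is pointed, being contained in the PSD cone) contains a perfect-form vertex, which suffices for the bound $|\Min(Q)| < |\Min(Q')|$ whenever $Q$ is not itself perfect.
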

We want to emphasize that this has been a long-term project and that many of the technical improvements that were needed for this work have been presented and were used to attain other results in earlier works published by the authors and other co-authors. This culmination of works by us and others has led to the feasibility of the presented result. For sake of completeness we aim to give a high-level explanation of these improvements in this work.

\subsection*{Outline.}
In \cref{sec:preliminaries} we present some preliminaries on lattices, quadratic forms and polyhedra. Then in \cref{sec:ryshkov_s_polyhedra_and_perfect_forms,sec:voronoi_s_algorithm} we give a background on Voronoi's perfect form theory and a high-level description of Voronoi's algorithm.
In \cref{sec:canonical_forms} we treat several canonical functions that were vital to our computation.
In \cref{sec:the_dual_description}, which could be of independent interest, we explain the algorithm used to solve the dual description problem under symmetries, along with many algorithmic and implementation improvements.
Lastly, in \cref{sec:results} we discuss all results we obtained by successfully completing Voronoi's algorithm in dimension $9$.

\subsection*{Data and implementations}
We aim to make the data and implementations used in this work public. 
The full classification of perfect forms in dimension $9$ is available split into 64 binary \textbf{netCDF} files at~\cite{DutourSikiric2025Complete}. Their total size is roughly 150GB.
We wrote C++ and python utilities to easily access this data once downloaded\footnote{Available at \url{https://github.com/MathieuDutSik/AccessPerfectDim9} and \url{https://github.com/WvanWoerden/AccessPerfectDim9Py} respectively.}.
The dual description under symmetry program has been developed publicly at~\cite{PolyhedralCpp}.
Our parallel implementation of Voronoi's algorithm, including the canonical form implementation, will be made public in due course. 

\subsection*{Acknowledgements}
Significant parts of this work have been completed while W.~van Woerden was employed in the Cryptology group at CWI in Amsterdam and in the Algorithmic Number Theory group at the University of Bordeaux.
A small part of the computations presented in this work were carried out on the Lisa Compute Cluster, a component of the Dutch national e-infrastructure, which was operated by SURF. Most of the computations were performed on the Curta cluster of the Mésocentre de Calcul Intensif Aquitain (MCIA).

The authors would like to thank Achill Sch\"urmann and Jacques Martinet for insightful discussions and suggestions that were beneficial to this work.
The authors would also like to thank Matthias Schymura and Gregory Minton for useful feedback on the first draft of this work.

\section{Preliminaries}
\label{sec:preliminaries}

\subsection{Lattices and lattice packings}

\begin{definition}[Lattice]
  For a basis $B \in \RR^{d \times d}$ with linearly independent columns $b_1, \ldots, b_d \in \RR^d$ we define the \emph{lattice} generated by $B$ as
  $$\lat(B) := B \cdot \ZZ^d = \left\{ \sum_{i=1}^d x_i d_i : (x_1, \ldots, x_d) \in \ZZ^d \right\}.$$
  For a lattice $\lat$ with basis $B$ we denote $\det(\lat) := |\det(B)|$ and $\lambda_1(\lat) := \min_{v \in \lat \setminus \{0\}} \norm{v}$ for its (co)volume and first minimum respectively.
\end{definition}
A lattice basis is not unique, in particular for any unimodular transformation $U \in \GL_d(\ZZ)$ the bases $B$ and $BU$
generate the same lattices.
Furthermore, we call two lattices $\lat, \lat'$ isomorphic if $\lat' = O \cdot \lat := \{ Ov : v \in \lat \}$ for an orthonormal
transformation $O \in \cO_d(\RR)$.
Such an isomorphism, if it exists, is unique up to composition with an \emph{automorphism} $O' \in \Aut(\lat) := \{ O' \in \cO_d(\RR) : O' \cdot \lat = \lat \}$. If two lattices are isomorphic up to some positive scaling we call them \emph{similar}.
The space of lattices up to isomorphisms can be identified with the double coset $\cO_d(\RR) \setminus \GL_d(\RR) / \GL_d(\ZZ)$.

The first minimum $\lambda_1(\lat)$ of a lattice denotes the minimum length of any nonzero vector, and we denote the set of \emph{minimal vectors} attaining this length by $\Min(\lat) := \{ v \in \lat : \norm{v} = \lambda_1(\lat) \}$.
Equivalently, $\lambda_1(\lat)$ is the minimum distance between any two distinct lattice points, and thus any lattice $\lat$ can be turned into a sphere packing by placing balls of radius $\frac{1}{2}\lambda_1(\lat)$ around each lattice point.
Such packings are called \emph{lattice packings} or \emph{regular sphere packings}, and they attain the following packing density.
\begin{definition}[Packing density]
For a lattice $\lat \subset \RR^d$ of dimension $d \geq 1$ we define its packing density $\delta(\lat)$ by
$$\delta(\lat) := \frac{\lambda_1(\lat)^d \cdot \Vol(\ball^d)}{2^d \cdot \det(\lat)} \sim \frac{\lambda_1(\lat)^d}{\det(\lat)}.$$
\end{definition}
The packing density $\delta(\lat)$ corresponds to the density of the corresponding lattice packing as is shown in~\cref{fig:latticedensity}.
Note that the volume, first minimum, and thus the packing density of a lattice are invariant under orthonormal transformations. Isomorphic lattices therefore have the same packing density. Moreover, as scaling does not change the density, similar lattices also have the same packing density.
A natural question then follows: what are the densest lattice packings up to similarity?
\begin{problem}[Lattice Packing Problem]
For a dimension $d \geq 1$, which lattices $\lat \subset \RR^d$ (up to similarity) attain the largest packing density?
\end{problem}

\begin{example}
As an example let use consider the best lattice packing $\Elat_8$ in dimension $8$ and the (until this work) conjectured best lattice packing $\Lambdalat_9$ in dimension $9$. The $\Elat_8$ root lattice has a relative simple description as follows
$$\Elat_8 := \{ x \in \ZZ^8 \cup (\ZZ+\tfrac{1}{2})^8 : \sum_{i=1}^8 x_i \equiv 0 \bmod{2}  \}.$$
It has a determinant of $1$, a first minimum of $\sqrt{2}$ and $240$ minimal vectors, leading to a density of $\pi^4/(2^4 \cdot 4!) \approx 0.25367$ which was shown to be optimal by Blichfeldt~\cite{blichfeldt1935minimum}.
The laminated lattice $\Lambdalat_9$ can be constructed by stacking the $\Elat_8$ root lattice. The unit vector $e_1 = (1,0,0,0,0,0,0,0)$ is a deep hole of $\Elat_8$ on which we can stack another copy of $\Elat_8$ by using the gluing vector $e_1 + e_9$ of length $\sqrt{2}$. We then obtain (one version of) the laminated lattice as follows
$$\Lambdalat_9 := \{ (x~|~0) \in \RR^9~|~x \in \Elat_8  \} + (e_1+e_9) \cdot \ZZ.$$
This lattice has determinant $1$, a first minimum of $\sqrt{2}$ and $272$ minimal vectors, leading to a density of $(4! \cdot 2^{9/2} \cdot \pi^4)/9! \approx 0.14577$. Typically we scale $\Lambdalat_9$ by $\sqrt{2}$ to make it integral.
\end{example}

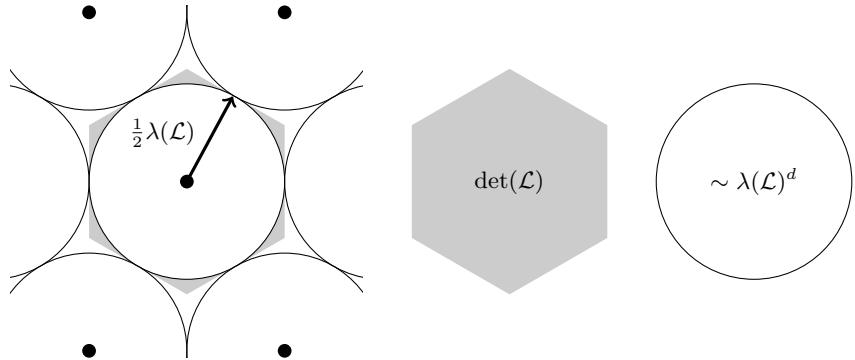
\begin{figure}
\begin{tikzpicture}[scale=1.3]

    \begin{scope}
    \clip (-1.80,-1.80) rectangle (1.80,1.80);
  \fill[opacity=0.2] (1,0.57735027) -- (0,1.15470054) -- (-1,0.57735027) -- (-1,-0.57735027) -- (0,-1.15470054) -- (1,-0.57735027) -- cycle;
  \fill[white] (0,0) circle (1);
  \draw (0,0) circle (1);
  \fill (0,0) circle[radius=2pt];
  \draw (1,1.73205081) circle (1);
  \fill (1,1.73205081) circle[radius=2pt];
  \draw (-1,1.73205081) circle (1);
  \fill (-1,1.73205081) circle[radius=2pt];
  \draw(-1,-1.73205081) circle (1);
  \fill (-1,-1.73205081) circle[radius=2pt];
  \draw (1,-1.73205081) circle (1);
  \fill (1,-1.73205081) circle[radius=2pt];
  \draw (2,0) circle (1);
  \fill (2,0) circle[radius=2pt];
  \draw (-2,0) circle (1);
  \fill (-2,0) circle[radius=2pt];
  \draw[very thick,->] (0,0) -> (0.47,0.87);
  \draw (-0.25,0.5) node {\small$\frac{1}{2}\lambda(\lat)$};
  \end{scope}

 \begin{scope}[shift={(3.3,0)}]
  \fill[opacity=0.2] (1,0.57735027) -- (0,1.15470054) -- (-1,0.57735027) -- (-1,-0.57735027) -- (0,-1.15470054) -- (1,-0.57735027) -- cycle;
  \draw (0,0) node {\small $\det(\lat)$};
  \fill[white] (2.5,0) circle (1);
  \draw (2.5,0) circle (1);
  \draw (2.5,0) node {\small $\sim \lambda(\lat)^{d}$};
  \end{scope}
\end{tikzpicture}
\caption{The density of a lattice packing is determined by its first minimum $\lambda_1(\lat)$ and its (co)volume $\det(\lat)$.}\label{fig:latticedensity}
\end{figure}

\subsection{Quadratic forms and the Hermite constant}
\label{sub:quadratic_forms_and_the_hermite_constant}
The literature on lattice packings is usually phrased in the setting of quadratic forms.
We will follow this convention in this work and shortly explain here the relation between lattices and (positive definite) quadratic forms.
This can be seen as interpreting the space of lattices up to isomorphisms $\cO_d(\RR) \setminus \GL_d(\RR) / \GL_d(\ZZ)$ in a different way.

With a matrix $Q \in \RR^{d \times d}$ we can associate a \emph{quadratic form} $\RR^d \to \RR, x \mapsto Q[x] := x^t Q x$ in $d$ variables. Note that we can assume without loss of generality that $Q$ is symmetric and thus we identify the space of quadratic forms in $d$ variables with the space $\S^d \subset \RR^{d \times d}$ of real symmetric matrices of dimension $n := \frac{1}{2}d(d+1)$.
We can furthermore define the Frobenius inner product on $\S^d$:
    \begin{align*}
      \langle A,B \rangle := \Tr(A^tB) = \sum_{i,j} A_{ij}B_{ij}.
    \end{align*}
Note that for a lattice with basis $B \in \GL_d(\RR)$ the \emph{gram matrix} $Q := B^tB$ is a symmetric matrix and thus a quadratic form.
Furthermore, the gram matrix $Q$ is \emph{positive definite} leading to a \emph{positive definite quadratic form} (PQF),
which induces an inner product $\langle x,y \rangle_Q := x^tQy = \langle Bx, By \rangle$ and a norm $\norm{x}_Q := \sqrt{Q[x]} = \norm{Bx}$ on $\RR^d$.
In the other direction, given a PQF $Q$, one can compute the Cholesky decomposition $Q = C^t C$, giving a lattice $\lat(C)$ with gram matrix $Q$. Note that this is not unique, in particular any basis $O \cdot C$ for $O \in \cO_d(\RR)$ has the same gram matrix $Q$. The set of PQF's $\S_{>0}^d$ can thus be identified with the left coset $\cO_d(\RR) \setminus \GL_d(\RR)$. We call two PQF's $Q, Q' \in \S_{>0}^d$ \emph{equivalent} if $Q' = U^t Q U$ for some unimodular transformation $U \in \GL_d(\ZZ)$, and \emph{similar} if they are equivalent up to scaling by some $\alpha > 0$. This precisely corresponds to lattice isomorphism and similarity respectively. Just as in the lattice case, such a transformation is unique up to composition with an \emph{automorphism} $V \in \Aut(Q) := \{ V \in \GL_d(\ZZ) : V^tQV = Q \}$. Note that the space of PQF's up to equivalence can thus also be seen as the double coset $\cO_d(\RR) \setminus \GL_d(\RR) / \GL_d(\ZZ)$. Lattices and PQF's are simply different interpretations of the same object.
For a PQF $Q \in \S_{>0}^d$ we can therefore also define similar properties as for a lattice, namely its determinant $\det(Q)$, and its first minimum and minimal vectors
  \begin{align*}
    \lambda(Q) := \min\limits_{x \in \ZZ^d - \{ 0 \}} Q \lbrack x \rbrack, \quad & \quad \Min(Q) := \{ x \in \ZZ^d : Q\lbrack x \rbrack = \lambda(Q) \}.
  \end{align*}
For a lattice $\lat$ with basis $B$ and gram matrix $Q = B^t B$ we have the following correspondence $\det(Q) = \det(\lat)^2$, $\lambda_1(Q) = \lambda_1(\lat)^2$, and $\Min(Q) = \{ B^{-1}v : v \in \Min(\lat) \}$. In particular a lattice vector $v = Bx \in \lat(B)$ corresponds to the vector $x \in \ZZ^d$ w.r.t. the PQF $Q = B^t B$.

In the setting of PQF's one does usually not consider the packing density, but the highly related \emph{Hermite invariant}.
\begin{definition}[Hermite invariant and constant]
For a PQF $Q \in \S_{>0}^d$ of dimension $d \geq 1$, we define its Hermite invariant by
    $$\gamma(Q) := \frac{\lambda_1(Q)}{\det(Q)^{1/d}}.$$
For any dimension $d \geq 1$ the \emph{Hermite constant} $\gamma_d$ is defined as
    $$\gamma_d := \sup_{Q \in \S_{>0}^d} \gamma(Q).$$
\end{definition}
The supremum in the definition of the Hermite constant can in fact be replaced by a maximum as it is attained by some form (for example this follows from the perfect form theory in~\cref{sec:ryshkov_s_polyhedra_and_perfect_forms}).
For a fixed dimension $d \geq 1$ and a lattice $\lat \subset \RR^d$ with gram matrix $Q \in \S_{>0}^d$ we have
\begin{align*}
  \delta(\lat) \sim \frac{\lambda_1(\lat)^d}{\det(\lat)} = \left( \frac{\lambda_1(Q)}{\det(Q)} \right)^{d/2} = \gamma(Q)^{d/2},
\end{align*}
and thus the packing density of the lattice $\lat$ is directly related to the Hermite invariant of the PQF $Q$, and the optimal packing density is directly related to the Hermite constant. Two forms that are equivalent or even only similar naturally have the same Hermite invariant. The lattice packing problem in dimension $d \geq 1$ can thus be rephrased as follows: which PQF $Q \in \S_{>0}^d$ (up to similarity) maximizes the Hermite invariant $\gamma(Q)$?

\subsection{Polytopes}
\label{sub:polytopes}
Consider the Euclidean vector space $\RR^n$ with the standard inner product $\langle x,y \rangle = x^\top y$.
For any vector $y \in \RR^n$ and real number $c \in \RR$ we can denote the (affine) \emph{hyperplane} by $H_{y,=c} = \{ x \in \RR^n : \langle x,y \rangle = c \}$ and the (affine) \emph{halfspace} by $H_{y,\geq c} = \{ x \in \RR^n : \langle x,y \rangle \geq c \}$. We define a (convex) \emph{polyhedron} $P = \bigcap_{i=1}^k H_{y_i, \geq c_i}$ as an intersection of a finite number of halfspaces. We call a polyhedron that is bounded a \emph{polytope}.

By grouping the vectors $y_1, \ldots, y_k$ together as the rows of a matrix $A$, and the constants $c = (c_1, \ldots, c_k)^t$ we can also describe a polyhedron by $P = \{ x \in \RR^n : Ax \geq c \}$, where the inequality $Ax \geq c$ has to be satisfied coefficient-wise.

If $c = (0, \ldots, 0)^t$ we call $P = \{ x \in \RR^n : Ax \geq 0\}$ a \emph{polyhedral cone}. For a polyhedral cone $P$, $x \in P$ and a positive scalar $\lambda \geq 0$ we always have $\lambda x \in P$. A polyhedral cone $P$ is called \emph{pointed} if $A$ has full-rank, or equivalently, if no nontrivial linear subspace is contained in it.

For an affine halfspace $H_{y,\geq c}$ that fully contains a polyhedron $P$, we call any intersection $F = P \cap H_{y,=c}$ a \emph{face} of $P$ and $\langle y, x \rangle \geq c$ for $x \in \RR^n$ a \emph{face-defining inequality} of $F$. We call the affine dimension of $F$ the dimension of the face denoted by $\dim(F)$. A face of dimension $0$ or $\dim(P)-1$ is called a \emph{vertex} or \emph{facet} respectively. A bounded face of dimension $1$ is called a \emph{ridge}. A pointed polyhedral cone $P$ has $0$ as the only vertex, and we call its (unbounded) faces of dimension $1$ \emph{extreme rays}.

For simplicity we restrict the following to \emph{pointed polyhedral cones} and we assume without loss of generality that the cone $P \subset \RR^n$ has full dimension $\dim(P) = n$.
We have already seen that (by definition) a polyhedral cone $P$ can be described by an intersection of a finite number of half-spaces given by vectors $a_1, \ldots, a_k$.
In fact, there exists a unique minimal set of such half-spaces, precisely corresponding to the facets of $P$.
We call this the \emph{facet or half-space representation} of $P$, often referred to as the \emph{H-representation} and typically represent it by $H(P) = A \in \RR^{k \times n}$ where $A$ has a minimal number of rows $a_1, \ldots, a_k$ such that $P = \{ x \in \RR^n : Ax \geq 0 \}$.

Now let $v_1, \ldots, v_m$ be some representative vectors for the extreme rays $\{ \lambda_1 v_1 : \lambda_1 \geq 0 \}, \ldots, \{ \lambda_m v_m : \lambda_m \geq 0 \}$ of $P$.
Then equivalently, one can describe $P$ as a convex combination of the vertex $0$ and its extreme rays, i.e., as $P = \{ \sum_{i=1}^m \lambda_i v_i : \lambda_i \geq 0 \}$. This is also called the extreme ray or \emph{vertex representation} of $P$, or equivalently its \emph{V-representation}. We typically denote $V(P) = V \in \RR^{n \times m}$ with columns $v_1, \ldots, v_m$ such that $P = \{ V\lambda \in \RR^n : \lambda \in \RR_{\geq 0}^m \}$.
We denote $|H(P)|$ and $|V(P)|$ for the number of facets and extreme rays of $P$ respectively.

The V-representation and H-representations are in some sense dual to each-other. More precisely, given a full-dimensional cone $P$ one can define its dual cone $$P^\circ = \{ y \in \RR^n : \langle x,y \rangle \geq 0~\forall x \in P \}.$$
Then $A \in \RR^{k \times n}$ is an H-representation of $P$ if and only if $A^t$ is a V-representation of $P^\circ$ and vice versa.
I.e., there is a one to one correspondence between the facets of $P$ and the extreme rays of its dual $P^\circ$.
Because $(P^\circ)^\circ = P$ there is a similar correspondence between the extreme rays of $P$ and the facets of $P^\circ$.

The problem of computing one representation, given the other, is known as the \emph{dual description problem},
and this is one of the main problems that we have to solve in this work.
For us it is sufficient to consider \emph{rational} cones that can be represented by rational inequalities or rational generators of the extreme rays.
\begin{problem}[Dual Description Problem]
  Let $P = \{ V \lambda \in \RR^n : \lambda \in \RR_{\geq 0}^m \}$ be a pointed polyhedral cone given by a V-representation $V \in \QQ^{n \times m}$. Recover an H-representation of $P$.
\end{problem}
By considering the dual $P^\circ$ instead of $P$, this problem is identical to the recovery of an H-representation from a V-representation.
The main obstacle for the dual description problem is that the H- and V-representations can differ wildly in size. For example, we will encounter a polyhedral cone $P \subset \RR^{45}$, related to the laminated lattice $\Lambdalat_9$, with only \numprint{136} extreme rays, but with \numprint{5221782341716704} facets.
Clearly, computing all these facets is beyond feasible.

This is where polyhedral symmetries, or automorphisms, come in.
The set of faces of a polyhedron $P$ form a poset under face inclusion. Moreover, for each pair of faces $F,G \subset P$, we have that the \emph{meet} $F \cap G$ is a face of $P$, and there exists a unique minimal face called the \emph{join} that contains $F \cup G$.
This implies that the poset of faces is in fact a \emph{poset lattice}, also called the \emph{face lattice}, not to be confused with the Euclidean lattices that form the main object of interest in this work.
The face lattice is both atomic and co-atomic, meaning that every face $F$ can uniquely be described by the facets it is contained in or by the extreme rays it contains.

The {\em combinatorial automorphism group} $\Comb(P)$ of a polyhedron $P$ is the group of permutations of the face lattice, that preserves inclusion relations.
By the atomic or co-atomic property these automorphisms are fully determined by their action on the set of facets or extreme rays, and thus we can view $\Comb(P)$ either as a subgroup of the permutation groups $\Sym(|V(P)|)$ or $\Sym(|H(P)|)$, depending on the situation.

More precisely, if a polyhedron has the extreme rays $\{\RR v_i\}_{1\leq i\leq m}$ then a face $F$ is uniquely
described by the set of extreme rays it contains.
One can therefore represent each face $F$ by a subset $S_F \subset \{ 1, \ldots, m \}$, and we can interpret $\Comb(P)$ as a subgroup of $\Sym(m)$, which acts on a face represented by $S_F$ as $\sigma \circ S_F = \{ \sigma(i) : i \in S_F \}$. We call this the \emph{incidence representation}. Following this naming we call the number of extreme rays $|S_F|$ the \emph{incidence} of $F$.

The combinatorial automorphism group is generally difficult to compute since the faces are precisely what interests us.
Again considering the extreme rays $\{\RR v_i\}_{1\leq i\leq m}$ for some representative vectors $v_1, \ldots, v_m$, the {\em restricted linear automorphism group} $\Lin_{.}(P)$ is the group of permutations $\sigma \in \Sym(n)$ such that there exists a matrix $B \in \GL_n(\RR)$ for which $Bv_i = v_{\sigma(i)}$ for all $1\leq i \leq m$.
Note that the obtained group can depend on the precise representatives $v_i$.
However, we do always have that $\Lin_{.}(P) \subset \Comb(P)$.
Furthermore, this group is efficiently computable in practice by reducing it to a graph automorphism problem. We refer to \cite{GroupPolytopeLMS} for more theoretical and computational details.
For our purposes it is sufficient to only obtain the facet orbits up to some symmetry group $G \subset \Comb(P)$.
\begin{problem}[Dual Description Problem under Symmetry]
  Let $P = \{ V \lambda \in \RR^n : \lambda \in \RR_{\geq 0}^m \}$ be a pointed polyhedral cone given by a V-representation $V \in \QQ^{n \times m}$ and a subgroup $G \subset \Comb(P) \cong \Sym(m)$ of combinatorial automorphisms. Recover an H-representation of $P$ up to the action of $G$, i.e., recover a single representative of each facet orbit.
\end{problem}
The previous example with \numprint{136} extreme rays and \numprint{5221782341716704} facets, has a large linear automorphism group of order \numprint{660602880}. We will show that under these automorphisms there are \numprint{64001686} facet \emph{orbits}, which, while still large, becomes feasible to compute with our methods.

\section{Ryshkov's polyhedra and perfect forms}
\label{sec:ryshkov_s_polyhedra_and_perfect_forms}

There are two viewpoints on perfect forms that have been considered, which are essentially dual to each other.
The original one, introduced by Voronoi~\cite{VoronoiI}, considers \emph{perfect domains}, and is extensively treated by Martinet~\cite{martinet}.
The other point of view, which considers \emph{Ryshkov polyhedra},
was introduced in \cite{ryshkov1970polyhedron} and is detailed further in \cite{achill-enumerating}.
Both viewpoints are considered together in \cite{Opgenorth2001} in a generalized setting.
The Ryshkov polyhedron gives a more intuitive view with respect to the lattice packing problem, so we focus on this notion here.
The idea is to restrict the space of lattice bases (up to isometry), to those with a (squared) first minimum of at least some $\lambda > 0$.
It turns out that under the Frobenius inner product, this results in a convex body.
\begin{definition}[Ryshkov polyhedron~\cite{ryshkov1970polyhedron}]
For any $\lambda > 0$ we define the Ryshkov polyhedron $\P^d_{\lambda}$ with minimum $\lambda$ by
\begin{align*}
    \P^d_\lambda &= \{ Q \in \S_{>0}^d : \lambda(Q) \geq \lambda \} \\
    &= \bigcap\limits_{x \in \ZZ^d \setminus \{ 0 \}} \{ Q \in \S^d : \langle Q, xx^t \rangle \geq \lambda \} \subset \S_{>0}^d.
\end{align*}
\end{definition}
An example of a Ryshkov polyhedron in dimension $2$ is shown in~\cref{fig:ryshkov}. Note that Ryshkov polyhedra are technically not polyhedra according to our definition, as they are an \emph{infinite} intersection of halfspaces. However they are \emph{locally finite}~\cite[Theorem 3.1]{bookschurmann}: the intersection of $\P^d_\lambda$ with any other polytope $P$, is again a polytope, i.e., locally they behave the same as a polyhedron. In particular, we can therefore still naturally talk about vertices and facets of $\P^d_\lambda$, even if there are an infinite number of them.
With this parallel, note that the facets of $\P^d_\lambda$ correspond precisely to all primitive integer vectors $\pm x \in \ZZ^d$ up to sign.
\begin{figure}
\includegraphics[width=0.49\textwidth,keepaspectratio]{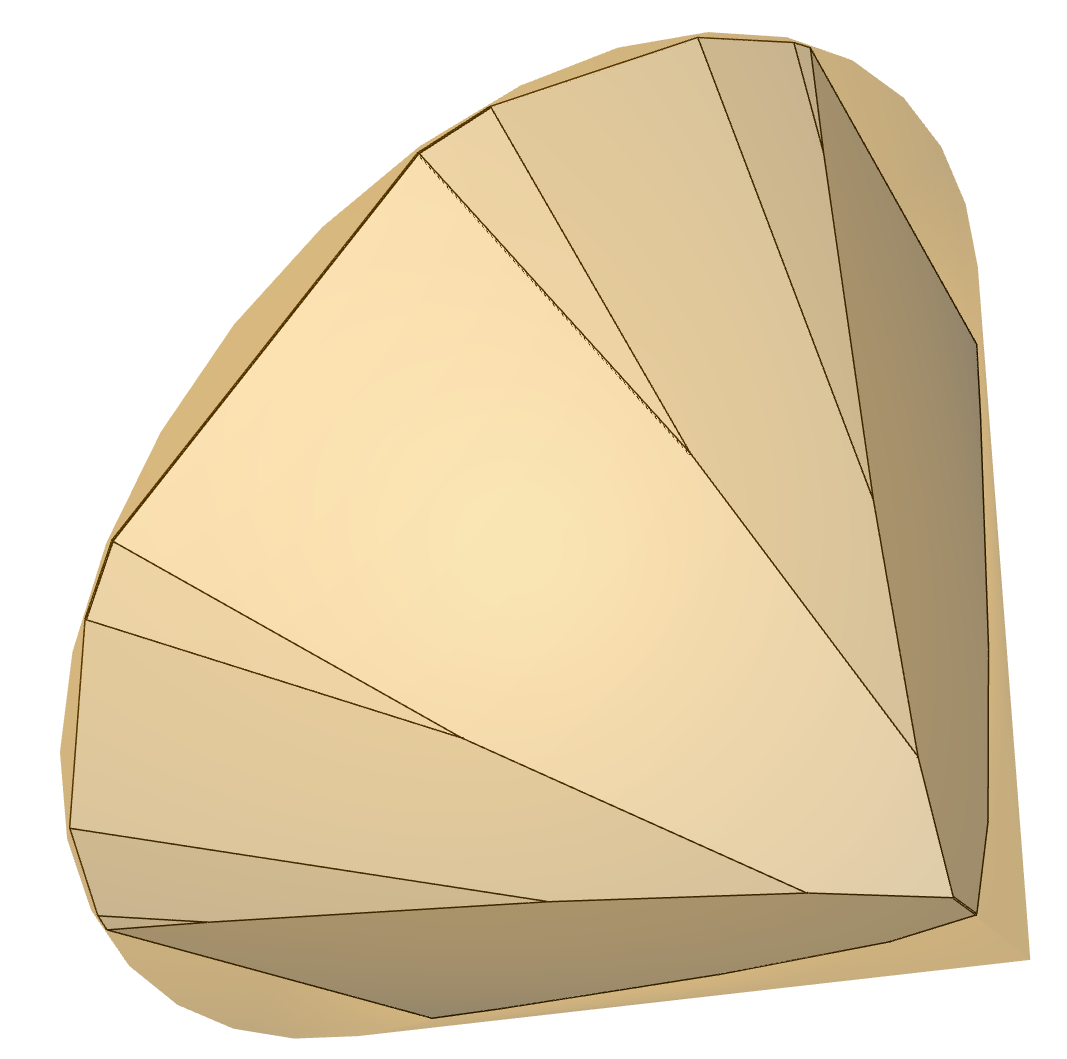}
\includegraphics[width=0.49\textwidth,keepaspectratio]{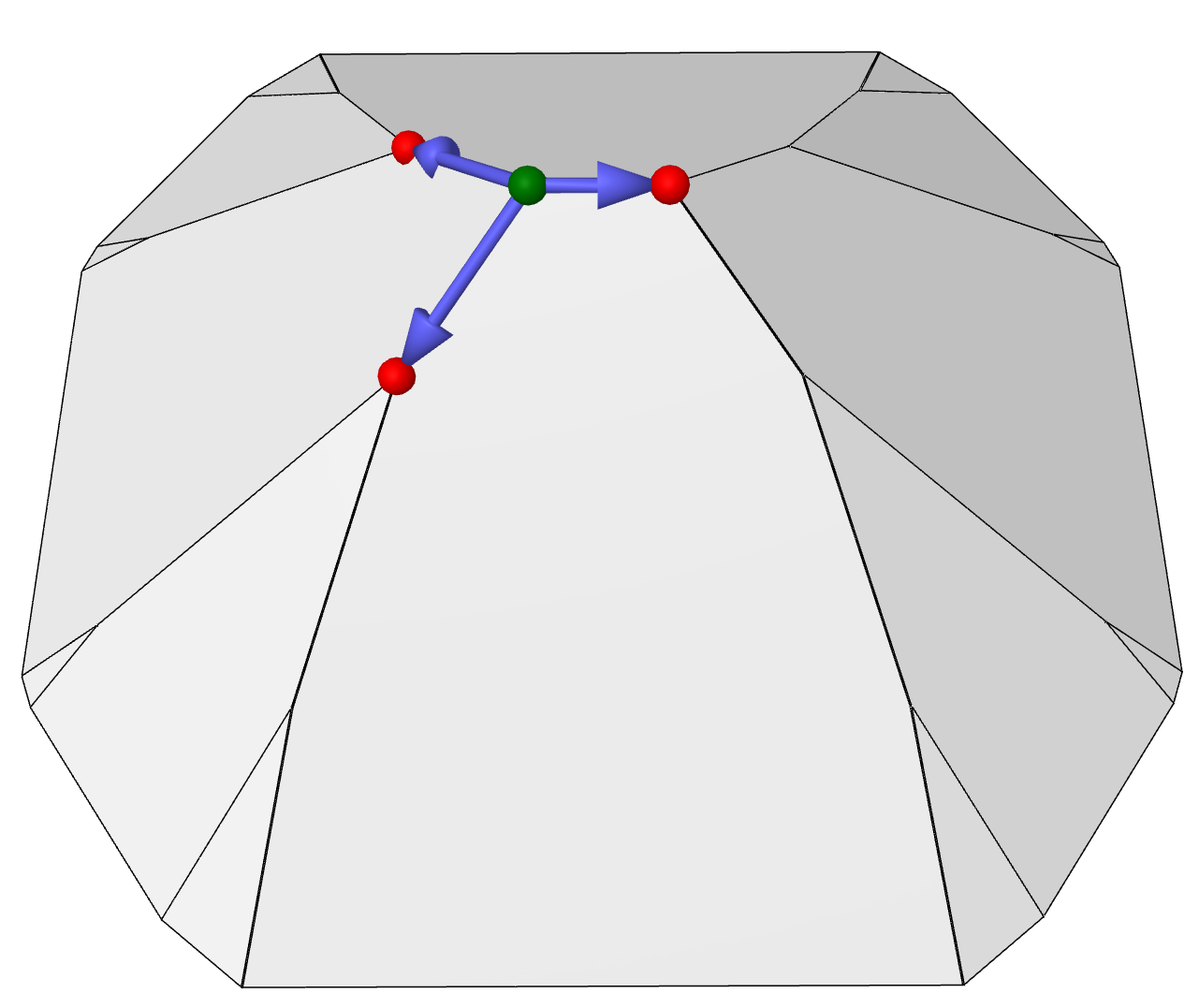}
\caption{On the left, part of Ryshkov polyhedron inside the cone of $2$-dimensional positive definite quadratic forms. Its vertices are perfect forms, such as the red and green vertices on the right. The group $\GL_d(\ZZ)$ acts as a symmetry group on the Ryshkov polyhedron. The red vertices are neighbouring perfect forms of the green vertex, indicating an exploration step of Voronoi's algorithm.
}
\label{fig:ryshkov}
\end{figure}

Now that the first minimum $\lambda_1(Q)$ is lower-bounded for all $Q \in \P^d_\lambda$, to maximize the Hermite invariant $\gamma(Q) = \lambda_1(Q)/\det(Q)^{1/d}$, it is sufficient to minimize the normalized determinant over $\P^d_\lambda$, which turns out to be a concave function.
\begin{lemma}[Convex optimization]
Let $\lambda > 0$, for the Ryshkov polyhedron $\P^d_\lambda \subset \S_{>0}^d$ we have
\begin{align*}
  \gamma_d = \frac{\lambda}{\min_{Q \in \P^d_\lambda} \det(Q)^{1/d}}.
\end{align*}
Furthermore, $Q \mapsto \det(Q)^{1/d}$ is concave over $\S_{>0}^d$ and thus $\gamma_d$ is attained at the vertices of $\P^d_\lambda$.
\end{lemma}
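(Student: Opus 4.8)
The plan is to read off the displayed identity directly from the scale-invariance of the Hermite invariant, and then to pin the minimizer down to a vertex using the strict concavity (up to scaling) of $Q\mapsto\det(Q)^{1/d}$. For the identity, note that $\lambda(\alpha Q)=\alpha\lambda(Q)$ and $\det(\alpha Q)=\alpha^{d}\det(Q)$ for $\alpha>0$, so $\gamma(Q)=\lambda(Q)/\det(Q)^{1/d}$ is scale-invariant and equivalently $\det(Q)^{1/d}=\lambda(Q)/\gamma(Q)$. For every $Q\in\P^d_\lambda$ we have $\lambda(Q)\ge\lambda$ and $\gamma(Q)\le\gamma_d$, hence $\det(Q)^{1/d}\ge\lambda/\gamma_d$ and so $\inf_{Q\in\P^d_\lambda}\det(Q)^{1/d}\ge\lambda/\gamma_d$. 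Conversely, rescaling an arbitrary $Q_0\in\S_{>0}^d$ to $Q_0':=(\lambda/\lambda(Q_0))\,Q_0$ gives $\lambda(Q_0')=\lambda$, hence $Q_0'\in\P^d_\lambda$, with $\det(Q_0')^{1/d}=\lambda/\gamma(Q_0)$; letting $\gamma(Q_0)\to\gamma_d$ shows $\inf_{Q\in\P^d_\lambda}\det(Q)^{1/d}\le\lambda/\gamma_d$. Thus $\inf_{Q\in\P^d_\lambda}\det(Q)^{1/d}=\lambda/\gamma_d$. The infimum is in fact attained: $\gamma_d$ itself is attained by some $Q_0$ (a classical fact, and in any case recovered by the perfect form theory of this section), and the corresponding $Q_0'$ then realizes the minimum.

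For the second assertion, recall that $Q\mapsto\det(Q)^{1/d}$ is concave on $\S_{>0}^d$: this is the Minkowski determinant inequality $\det(A+B)^{1/d}\ge\det(A)^{1/d}+\det(B)^{1/d}$, equivalently the fact that $\det(Q)^{1/d}=\tfrac1d\inf\{\langle Q,X\rangle : X\in\S_{>0}^d,\ \det X=1\}$ is an infimum of linear functionals of $Q$. Moreover equality in the Minkowski inequality forces $A$ and $B$ to be proportional, so $\det^{1/d}$ is \emph{strictly} concave on any segment whose endpoints are not positive multiples of one another. Now let $Q^{*}$ minimize $\det^{1/d}$ over $\P^d_\lambda$. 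Then $\lambda(Q^{*})=\lambda$, since otherwise $(1-\varepsilon)Q^{*}\in\P^d_\lambda$ for small $\varepsilon>0$ with strictly smaller determinant. If $Q^{*}$ were not a vertex, there would be $S\in\S^d\setminus\{0\}$ with $Q^{*}\pm\varepsilon S\in\P^d_\lambda$ for all small $\varepsilon>0$; here $S$ cannot be proportional to $Q^{*}$, as otherwise one of $Q^{*}\pm\varepsilon S$ would equal $(1-\delta)Q^{*}$ for some $\delta>0$, which is not in $\P^d_\lambda$. Hence $Q_1:=Q^{*}+\varepsilon S$ and $Q_2:=Q^{*}-\varepsilon S$ are distinct members of $\P^d_\lambda$, not proportional to one another, with $Q^{*}=\tfrac12(Q_1+Q_2)$, so strict concavity yields
\[
  \det(Q^{*})^{1/d}\;>\;\tfrac12\left(\det(Q_1)^{1/d}+\det(Q_2)^{1/d}\right)\;\ge\;\det(Q^{*})^{1/d},
\]
the last step because $Q^{*}$ is the global minimizer and $Q_1,Q_2\in\P^d_\lambda$ — a contradiction. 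Therefore $Q^{*}$ is a vertex of $\P^d_\lambda$, and $\gamma_d=\lambda/\det(Q^{*})^{1/d}=\gamma(Q^{*})$ is attained there.

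The only step that is not purely formal is the existence of the minimizer; everything else follows from scale-invariance of $\gamma$ and the (strict) Minkowski determinant inequality. If one prefers to avoid invoking the classical attainment of $\gamma_d$, the cleanest substitute is Mahler's compactness criterion applied to the sublevel sets $\{Q\in\P^d_\lambda:\det Q\le c\}$: these correspond to families of lattices with first minimum bounded below (by $\lambda$) and covolume bounded above (by $\sqrt{c}$), hence are compact modulo $\GL_d(\ZZ)$, and $\det^{1/d}$ is $\GL_d(\ZZ)$-invariant and continuous, so its infimum over $\P^d_\lambda$ is attained.
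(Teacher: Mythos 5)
Your proof is correct and follows the same underlying strategy as the paper (Minkowski concavity, strict concavity modulo scaling, and the resulting localization of the minimum to a vertex), but you spell out details the paper leaves implicit: you derive the identity $\gamma_d = \lambda / \min_{Q\in\P^d_\lambda}\det(Q)^{1/d}$ directly from scale-invariance, you handle attainment of the minimum via Mahler's compactness criterion instead of taking it for granted, and you give an explicit midpoint contradiction rather than simply asserting that a concave function on a (locally finite) polyhedron is minimized at a vertex. One small point worth making explicit: your step ``if $Q^*$ is not a vertex, then there exists $S\neq 0$ with $Q^*\pm\varepsilon S\in\P^d_\lambda$'' silently uses that $\P^d_\lambda$ is \emph{locally finite} (so that extreme points and polyhedral vertices coincide and non-vertex points lie in the relative interior of a positive-dimensional face); for a general convex set defined by infinitely many halfspaces this equivalence can fail. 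The paper cites local finiteness precisely at this juncture, and you should too.
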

\begin{proof}
The function $Q \mapsto \det(Q)^{1/d}$ was already shown to be concave in $\S_{>0}^d$ by Minkowski~\cite{minkowski1905diskontinuitatsbereich} (see~\cite[Theorem 6]{vanWoerdenMasterThesis} for an elementary proof). It is even strictly concave except under scaling, in particular it is strictly concave on the boundary of $P_\lambda^d$.
Because $\P_\lambda^d$ is locally finite ~\cite[Theorem 3.1]{bookschurmann} and convex the minimum of the function over $\P_\lambda^d$ is thus attained at the vertices.
\end{proof}
To solve the lattice packing problem, i.e., to find the optimal value of $\gamma_d$ we thus can focus on the vertices of $\P^d_\lambda$.
These are precisely the \emph{perfect forms}.
\begin{definition}
A PQF $Q \in \S^d_{>0}$ is called \emph{perfect} or a \emph{perfect form} if one of the following three equivalent statements is true:
\begin{itemize}
\item $Q$ is a vertex of the Ryshkov polyhedron $\P^d_{\lambda_1(Q)}$, or
\item the set $\{ xx^t : x \in \Min(Q)\} \subset \S^d$ has full rank $n=\frac{1}{2} d (d+1)$.
\item $Q$ is uniquely determined by $\Min(Q)$ and $\lambda_1(Q)$.
\end{itemize}
\end{definition}
The second statement also has a geometric interpretation, namely the (inner) normal cone of $Q \in \P^d_{\lambda_1(Q)}$, also known as the \emph{Voronoi domain} $\Vor(Q)$ of $Q$, is spanned by $\{ xx^t : x \in \Min(Q) \} \subset \S^d$, and it has full dimension $n = \frac{1}{2}d(d+1)$ in $\S^d$ if and only if $Q$ is a vertex.
Perfect forms are thus indeed fully determined by their minimal vectors up to scaling, i.e. they are the unique solution $Q'=Q$ to the system $\langle xx^t, Q' \rangle = \lambda$ for all $x \in \Min(Q)$.

The Ryshkov polyhedron $\P^d_\lambda \subset \S_{>0}^d$ has an infinite number of facets and an infinite number of vertices. In particular, there are an infinite number of perfect forms. However, there is also an infinite symmetry group acting on $Q \in \P^d_\lambda$, namely the group of unimodular transformations $U \in \GL_d(\ZZ)$ which acts on the right by $Q \circ U \mapsto U^t Q U$.
Note that these transformations are linear on $\S^d$ and keep the first minimum invariant, and are thus automorphisms of the Ryshkov polyhedron $\P^d_\lambda$.
More precisely, a facet defined by a primitive vector $x \in \ZZ^d$ is mapped to a facet defined by $U^{-1}x$, and we have $\Min(U^tQU) = U^{-1}\Min(Q)$.
Under these symmetries, there are only a finite number of distinct vertices or non-similar perfect forms.
\begin{theorem}[Number of perfect forms~\cite{bacher2018number,van2020upper}]
Up to similarity there are only a finite number $p_d$ of perfect forms in each dimensions. In particular, we know that $\exp(\Omega(d)) < p_d < \exp(O(d^2 \log(d))$.
\end{theorem}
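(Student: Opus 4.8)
The statement has two parts — finiteness of $p_d$ and the explicit bounds $\exp(\Omega(d)) < p_d < \exp(O(d^2 \log d))$ — and I would treat them separately.

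For finiteness, the clean approach is to use the Ryshkov polyhedron viewpoint just developed. A perfect form is a vertex of $\P^d_\lambda$, and the group $\GL_d(\ZZ)$ acts on $\P^d_\lambda$ by automorphisms. So it suffices to show that $\GL_d(\ZZ)$ acts with finitely many orbits on the vertices of $\P^d_\lambda$. I would fix $\lambda = 1$ and invoke reduction theory: every PQF is $\GL_d(\ZZ)$-equivalent to one lying in a suitable reduction domain (e.g.\ the Minkowski or the Hermite–Korkine–Zolotarev domain). A perfect form $Q$ with $\lambda_1(Q)=1$ has $\det(Q)^{1/d} \le \gamma_d^{-1}$ by the Convex optimization lemma (well, $\det(Q) \ge$ some positive lower bound, and $\det(Q)^{1/d} \le \lambda/\gamma_d$), and combined with $\lambda_1(Q) = 1$ the reduced representative lies in a \emph{bounded} region of $\S^d_{>0}$. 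Now local finiteness of $\P^d_\lambda$ (\cite[Theorem 3.1]{bookschurmann}) says this bounded region meets only finitely many vertices of $\P^d_\lambda$. Hence finitely many orbits, i.e.\ $p_d < \infty$.

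For the lower bound $\exp(\Omega(d)) < p_d$, I would cite the construction in \cite{bacher2018number}: one exhibits exponentially many pairwise non-similar perfect forms, for instance by a gluing or direct-sum-type construction producing perfect forms indexed by combinatorial data of exponential size, and checks perfection via the rank condition $\operatorname{rank}\{xx^t : x \in \Min(Q)\} = n$. For the upper bound $p_d < \exp(O(d^2 \log d))$, the idea from \cite{van2020upper} is that a perfect form is determined by its set of minimal vectors; one bounds the number of minimal vectors of a perfect form (the kissing number of an extreme-or-perfect lattice is at most, say, $2^{O(d)}$ or a polynomial-in-$d$ times that — a bound of the form $|\Min(Q)| \le d^{O(1)} \cdot 2^{d}$ suffices, or even cruder bounds), and then counts the number of possible such configurations of integer vectors of bounded norm. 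Since there are $n = \binom{d+1}{2}$ linear conditions and the minimal vectors can be taken in a bounded box after reduction, the count is at most $\exp(O(d^2 \log d))$.

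**Main obstacle.** The finiteness argument is essentially soft once one quotes reduction theory and local finiteness, so I do not expect trouble there. The genuinely hard part is the quantitative upper bound $p_d < \exp(O(d^2\log d))$: it requires a good bound on the number of minimal vectors of a perfect form together with control on how many distinct minimal-vector \emph{configurations} (up to $\GL_d(\ZZ)$) can occur, and making the exponent come out as $d^2\log d$ rather than something larger needs the sharper kissing-number estimates of \cite{van2020upper}. Since both the lower and upper bounds are cited from \cite{bacher2018number} and \cite{van2020upper}, in the write-up I would state the finiteness proof in full and refer to those papers for the two inequalities, as the theorem attribution already indicates.
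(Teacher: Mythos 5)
The paper states this theorem without proof, citing \cite{bacher2018number} for the lower bound and \cite{van2020upper} for the upper bound, so there is no internal argument to compare against; your proposal is a reconstruction of the standard (Voronoi-style) finiteness proof with the quantitative bounds deferred to those references, which is the appropriate structure.

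There is, however, a genuine gap in your boundedness step. The Convex optimization lemma gives a \emph{lower} bound $\det(Q)^{1/d} \ge \lambda/\gamma_d$ on all of $\P^d_\lambda$, not the upper bound $\det(Q)^{1/d} \le \gamma_d^{-1}$ that you wrote; and $\det(Q)$ is genuinely unbounded over $\P^d_\lambda$ (take diagonal forms with one huge entry), so no reduction domain localizes the whole polyhedron to a compact set. What you actually need is an \emph{upper} bound on $\det(Q)$ that uses perfection. Because $\{xx^t : x \in \Min(Q)\}$ has full rank $n$ for a perfect $Q$, the minimal vectors span $\RR^d$; picking $d$ linearly independent ones gives a full-rank sublattice of covolume at most $\lambda_1(Q)^{d/2}$ (Hadamard), whence $\det(Q) \le \lambda_1(Q)^{d}$. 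Combined with the lower bound from the Hermite constant and with $\lambda_1(Q)$ fixed, the Minkowski-reduced representatives of perfect forms do lie in a compact region of $\S^d_{>0}$, and then local finiteness of $\P^d_\lambda$ closes the argument. Without this extra input the reduction-theory step does not yield boundedness, so this is a real missing ingredient rather than a notational slip. Deferring to \cite{bacher2018number,van2020upper} for the explicit $\exp(\Omega(d))$ and $\exp(O(d^2\log d))$ bounds is the right call, as both are substantial results in their own right.
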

The exact number of perfect forms in dimension $1$ to $9$, the last being the result of this work, are displayed in~\cref{tab:nbforms}.
One can see that the number of them grows extremely fast, seemingly closer to the super exponential growth of the upper bound $\exp(O(d^2 \log(d))$, than the single exponential lower bound $\exp(\Omega(d))$.

Note that not every perfect form is necessarily a local optimum for the Hermite constant, the perfect forms that are, are called \emph{extreme} and they are characterized by a eutaxy property.
\begin{definition}[Eutaxy and extreme forms]
 A PQF $Q \in \S^d_{>0}$ is called \emph{extreme} if it attains a local maximum for the Hermite invariant $\gamma_d$.
 $Q$ is called eutactic if its inverse $Q^{-1}$ is contained in the relative interior of the Voronoi domain $\Vor(Q)$. Equivalently, there exist $\lambda_x > 0$ for $\pm x \in \Min(Q)$ such that we have a \emph{eutaxy relation} $Q^{-1} = \sum_{\pm x \in \Min(Q)} \lambda_x \cdot xx^t$.
\end{definition}
\begin{lemma}[{\cite{VoronoiI}}]\label{lem:extreme_eutactic}
  A PQF $Q \in \S^d_{>0}$ is extreme if and only if it is perfect and eutactic.
\end{lemma}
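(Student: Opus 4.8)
The plan is to show both implications by analyzing the behaviour of the Hermite invariant $\gamma(Q') = \lambda_1(Q')/\det(Q')^{1/d}$ under small perturbations $Q' = Q + \epsilon T$ with $T \in \S^d$. Since $\gamma$ is scale-invariant, it suffices to work in a slice, say $\det = 1$ or equivalently to treat $T$ modulo the direction $Q$ itself. The key local object is the first minimum $\lambda_1(Q+\epsilon T)$, which is a minimum of the finitely many linear functions $\epsilon \mapsto \langle Q + \epsilon T, xx^t\rangle = \lambda_1(Q) + \epsilon\langle T, xx^t\rangle$ over $x \in \Min(Q)$ for $\epsilon$ small (using local finiteness of the Ryshkov polyhedron, \cite[Theorem 3.1]{bookschurmann}, so that no new minimal vectors appear suddenly). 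Hence $\lambda_1$ is the lower envelope of finitely many affine functions; its one-sided directional derivative in direction $T$ is $\min_{\pm x \in \Min(Q)} \langle T, xx^t\rangle$. Meanwhile $\det(Q+\epsilon T)^{1/d}$ is smooth with gradient proportional to $Q^{-1}$ (using $\tfrac{d}{d\epsilon}\log\det(Q+\epsilon T)|_{\epsilon=0} = \langle Q^{-1}, T\rangle$).

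First I would prove that extreme implies perfect. Suppose $Q$ is not perfect. Then $\{xx^t : x \in \Min(Q)\}$ does not span $\S^d$, so there is a nonzero $T \in \S^d$ with $\langle T, xx^t\rangle = 0$ for all $x \in \Min(Q)$, and we may take $T$ orthogonal to $Q^{-1}$ (adjusting by a multiple of $Q$, which changes nothing up to scaling; if $T$ is already a multiple of $Q$ it is not a new direction, but a generic choice avoids this). Then along $Q + \epsilon T$ the first minimum stays equal to $\lambda_1(Q)$ to first order in $\epsilon$ for both signs of $\epsilon$, while $\det(Q+\epsilon T)^{1/d}$ has vanishing first derivative and — by strict concavity of $\det^{1/d}$ off the scaling direction (the Minkowski inequality, as cited in the Convex Optimization lemma) — strictly decreases for $\epsilon \neq 0$ small, at least to second order. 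Hence $\gamma(Q+\epsilon T)$ strictly increases, contradicting that $Q$ is a local maximum. A little care is needed because $\lambda_1$ is only piecewise linear: I must check that the second-order decrease of $\det^{1/d}$ dominates any possible (necessarily $O(\epsilon^2)$ or higher, since the first-order term vanishes) change in $\lambda_1$ — but $\lambda_1$ as a min of affine functions is concave, so it too cannot increase faster than linearly, and since its first-order term vanishes it is $\le \lambda_1(Q)$ nearby; combined with strictly decreasing $\det^{1/d}$ this still forces $\gamma$ strictly up. Actually the cleanest route: $\lambda_1(Q+\epsilon T) \le \lambda_1(Q)$ is false in general, so instead I normalize to keep $\lambda_1$ exactly constant: since the $\langle T, xx^t\rangle$ vanish on $\Min(Q)$, for $\epsilon$ small $Q+\epsilon T \in \P^d_{\lambda_1(Q)}$ has the same minimal vectors and same $\lambda_1$, so $\gamma(Q+\epsilon T) = \lambda_1(Q)/\det(Q+\epsilon T)^{1/d} > \gamma(Q)$ — done.

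Next, extreme implies eutactic, and conversely perfect + eutactic implies extreme. For the forward direction, suppose $Q$ is perfect but not eutactic, i.e. $Q^{-1} \notin \mathrm{relint}\,\Vor(Q) = \mathrm{relint}\,\mathrm{cone}\{xx^t : x\in\Min(Q)\}$; since $Q$ is perfect this cone is full-dimensional, so $Q^{-1}$ lies outside its interior, hence is separated: there is $T \in \S^d$ with $\langle T, Q^{-1}\rangle \le 0$ but $\langle T, xx^t\rangle > 0$ for all $x \in \Min(Q)$ (strict separation from the cone generators, using that $Q^{-1}$ is not in the open cone; one gets a supporting functional that is nonnegative on the cone and $\le$ its value at $Q^{-1}$, and a small perturbation makes it strictly positive on the finitely many generators while keeping the sign against $Q^{-1}$). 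Then moving along $Q + \epsilon T$ for small $\epsilon > 0$: $\lambda_1$ increases at rate $\min_x \langle T, xx^t\rangle > 0$, while $\log\det$ increases at rate $\langle Q^{-1}, T\rangle \le 0$, so $\gamma$ strictly increases — contradiction. For the converse, assume $Q$ is perfect and eutactic; I want every small perturbation $T$ (not proportional to $Q$) to strictly decrease $\gamma$, or leave it equal only along scaling. The eutaxy relation $Q^{-1} = \sum \lambda_x xx^t$ with $\lambda_x > 0$ gives $\langle Q^{-1}, T\rangle = \sum \lambda_x \langle xx^t, T\rangle$; if all $\langle xx^t, T\rangle \ge 0$ then $\lambda_1$ does not decrease to first order but then $\langle Q^{-1},T\rangle \ge 0$ with equality forcing all $\langle xx^t,T\rangle = 0$, which by perfectness forces $T \propto Q$; otherwise some $\langle xx^t, T\rangle < 0$, so $\lambda_1$ strictly decreases to first order along $+T$ while the other direction handles $-T$. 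In all cases $\gamma$ does not strictly increase in any direction transverse to scaling, so $Q$ is a local maximum. I expect the main obstacle to be the careful first-versus-second order bookkeeping in the non-eutactic-to-extreme direction and making the separation argument clean when $Q^{-1}$ lies on the relative boundary of $\Vor(Q)$ (as opposed to strictly outside) — there the separating $T$ satisfies $\langle T, xx^t\rangle \ge 0$ with equality for some minimal vectors, so one gets only a non-strict first-order increase of $\lambda_1$ and must invoke the second-order strict concavity of $\det^{1/d}$ to conclude, exactly as in the not-perfect case.
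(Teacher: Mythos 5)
The paper does not prove this lemma; it cites Voronoi's 1908 paper, so there is no in-text proof to compare against, only the classical argument. Your overall strategy — first-order analysis of $\gamma = \lambda_1/\det^{1/d}$, using piecewise linearity of $\lambda_1$ from local finiteness of the Ryshkov polyhedron and the gradient $\nabla \log\det = Q^{-1}$ — is exactly the classical route, and the two ``extreme $\Rightarrow$'' directions are essentially sound modulo small fixes. (In ``extreme $\Rightarrow$ perfect'' you cannot make $T$ orthogonal to $Q^{-1}$ by adding a multiple of $Q$, since that destroys $\langle T, xx^t\rangle = 0$; but this is unnecessary — if $\langle Q^{-1},T\rangle \ne 0$ choose the sign of $\epsilon$ so $\det^{1/d}$ drops, and if $\langle Q^{-1},T\rangle = 0$ note $T\not\propto Q$ forces $\det^{1/d}$ to strictly decrease for $\epsilon\ne 0$ by strict concavity plus vanishing first derivative. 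The boundary case in ``extreme $\Rightarrow$ eutactic'' that you flag is handled the same way, not by a separate second-order estimate.)

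The genuine gap is in ``perfect $+$ eutactic $\Rightarrow$ extreme.'' Your case analysis never actually compares the first-order rate of change of $\lambda_1$ against that of $\det^{1/d}$. When all $\langle T, xx^t\rangle > 0$ (not just $\ge 0$), both $\lambda_1$ and $\det^{1/d}$ increase to first order, and ``$\lambda_1$ does not decrease while $\det$ increases'' is not enough to conclude $\gamma$ does not increase — the ratio could go either way. Likewise, when some $\langle T, xx^t\rangle < 0$, ``$\lambda_1$ strictly decreases'' does not by itself imply $\gamma$ decreases, since $\det^{1/d}$ could decrease faster. The missing ingredient is quantitative and comes directly from the eutaxy relation: contracting $Q^{-1} = \sum_{\pm x} \lambda_x\, xx^t$ with $Q$ gives $d = \langle Q^{-1},Q\rangle = \sum \lambda_x \cdot \lambda_1(Q)$, i.e. $\sum\lambda_x = d/\lambda_1(Q)$. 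Therefore, writing $m(T) := \min_{x\in\Min(Q)}\langle T,xx^t\rangle$ for the one-sided derivative of $\lambda_1$, one has
$$\langle Q^{-1},T\rangle = \sum \lambda_x \langle T, xx^t\rangle \;\ge\; m(T)\sum\lambda_x = \frac{d\, m(T)}{\lambda_1(Q)},$$
so the first-order change of $\log\gamma$ in direction $T$, namely $m(T)/\lambda_1(Q) - \langle Q^{-1},T\rangle/d$, is $\le 0$, with equality iff $\langle T,xx^t\rangle = m(T)$ for all $x\in\Min(Q)$ (here is where $\lambda_x>0$, not just $\ge 0$, is used), which by perfection forces $T\propto Q$. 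That single inequality is the heart of Voronoi's theorem and it is not present in your sketch; without it the converse direction is unproven.
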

We can also consider weaker and stronger forms of eutaxy. For example, we call $Q$ \emph{semi-eutactic} if we also allow the scalars $\lambda_x$ to be $0$, and we call $Q$ \emph{strongly-eutactic} if there exists a eutaxy relation with all $\lambda_x$ identical.

\section{Voronoi's algorithm}
\label{sec:voronoi_s_algorithm}
The classical Voronoi algorithm \cite{VoronoiI} is a graph traversal algorithm
that enumerates all perfect forms. That algorithm is explained in the books \cite{bookschurmann,martinet}
and articles \cite{PerfectDim8,achill-enumerating}.

\subsection{The algorithm}
Voronoi's algorithm makes use of the fact that perfect forms, or equivalently, vertices of a Ryshkov polyhedron, are connected via edges to neighbouring vertices or what we call neighbouring perfect forms.
See \cref{fig:ryshkov} for an example in dimension $2$.
For every perfect form, starting for example with the perfect root lattice $\Alat_d$, the algorithm enumerates all neighbouring perfect forms and adds them to the list if they have not been found before (up to equivalence).
As every perfect form is connected via a path of edges to any other perfect form, and this is preserved by the equivalence, one will eventually enumerate all of them.
In particular, once all perfect forms in the list have been treated, as in their neighbours have all been enumerated and added, we know that the list must be complete and Voronoi's algorithm terminates with a complete enumeration of perfect forms in a fixed dimension.

\begin{center}
\begin{algorithm}[h!]
\caption{Voronoi's algorithm~\cite{VoronoiI}} \label{alg:voronoisalgo}
\begin{algorithmic}[1]
\Require Dimension $d \geq 1$.
\Ensure A complete list $L$ of non-similar perfect forms of dimension $d$.
\State $L \gets \{ Q_{\Alat_d} \}$ \Comment{perfect form corresponding to root lattice $\Alat_d$.}
\While{$\exists$ untreated $Q \in L$}
  \State Compute $\Min(Q)$.
  \State Enumerate the extreme rays $R_1, \ldots, R_k$ of the cone \begin{align*}
      \P(Q) = \{ Q' \in \S^d : \langle Q', xx^t \rangle \geq 0 \text{ for all } x \in \Min(Q) \}.
    \end{align*}
  \ForAll{extreme rays $R_i$ of $\P(Q)$}
    \State Determine neighbouring perfect form $Q_i = Q + \alpha_i R_i$. \Comment{$\alpha_i > 0$}
    \If{$Q_i$ is not equivalent to any form in $L$}
      \State $L \gets L \cup \{ Q_i \}$.
    \EndIf
  \EndFor
  \State Mark $Q \in L$ as treated.
\EndWhile
\State \Output $L$
\end{algorithmic}
\end{algorithm}
\end{center}

\subsubsection*{Dual description.}
\label{sub:dualdescription}
To compute the neighbouring perfect forms of a perfect form $Q \in \S_{>0}^d$ we first need to compute the edges adjacent to the vertex $Q$ in the Ryshkov polyhedron $\P_\lambda^d$ where $\lambda = \lambda_1(Q)$.
By definition of the Ryshkov polyhedron the facets $\P_\lambda^d$ adjacent to $Q$ correspond precisely to the minimal vectors $\Min(Q)/\{ \pm \}$ of $Q$ up to sign as $\pm x$ corresponds to the facet-defining inequality $x^t Q' x = \langle Q', xx^t \rangle \geq \lambda$ for $Q' \in \S^d$.
By translation we can in fact look at the pointed polyhedral \emph{tangent cone}
$$\P(Q) := \{ Q' \in \S^d : \langle Q', xx^t \rangle \geq 0 \text{ for all } x \in \Min(Q) \}$$
of $Q$, as in the algorithm.
Note that the tangent cone $\P(Q)$ is precisely dual to the inner normal cone $\Vor(Q)$ known as the Voronoi domain of $Q$.
The extreme rays of $\P(Q)$ correspond precisely to the edges, more precisely for each ray generator $R_i$ of $\P(Q)$ there exists a scalar $\alpha_i > 0$ such that $Q_i = Q + \alpha_i R_i$ is a neighbouring perfect form with the same minimum.

Computing the extreme rays of $\P(Q)$ from the known facets, or equivalently, computing the facets of $\Vor(Q)=\P(Q)^\circ$ from the known extreme rays, is precisely a dual description problem.
For certain cases this problem is relatively easy, for example when $|\Min(Q)| = d(d+1)$ there are precisely $\frac{1}{2}d(d+1)$ linearly independent facets equal to the dimension of the space $\S^d$. In this case $\P(Q)$ is a simplicial cone with precisely $\frac{1}{2}d(d+1)$ extreme rays, and computing these can be done relatively quickly.
Informally, we call these cases of \emph{low incidence}.
We can efficiently solve these low-incidence cases with specialized software like \textbf{LRS}~\cite{lrs}, \textbf{CDD}~\cite{cdd} and \textbf{PPL}~\cite{BagnaraHZ08SCP}.

When the number of minimal vectors up to sign, and thus the number of facets of $\P(Q)$ grows beyond the dimension of the space the number of extreme rays can however quickly grow, leading to a combinatorial explosion.
For these cases of \emph{high incidence} the number of extreme rays, and therefore the number of neighbouring perfect forms, is often so high that it becomes infeasible to enumerate them all.
However, often a lot of these neighbouring perfect forms are in fact equivalent.
For example, an automorphism $U \in \Aut(Q)$ permutes the minimal vectors $\Min(Q)$ and therefore permutes the facets of $\P(Q)$.
As this permutation respects the geometry of the minimal vectors it is in fact an automorphism of the polyhedral cone $\P(Q)$, i.e., we have an inclusion $\Aut(Q)/\{ \pm 1 \} \hookrightarrow \Comb(\P(Q))$.
More precisely, for any extreme ray generator $R$ of $\P(Q)$, there is another extreme ray $U^tRU$ of $\P(Q)$. And both rays point to equivalent forms as:
$$Q + \alpha R \sim U^t(Q+\alpha R)U = U^tQU + \alpha U^tRU = Q + \alpha U^tRU,$$
using that $U \in \Aut(Q)$.
We thus only have to compute the orbits of the extreme rays of $\P(Q)$ up to the automorphisms induced by $\Aut(Q)$.
This is therefore a dual description problem under symmetries.
Even with all these symmetries this remains the main computational cost of Voronoi's algorithm in dimension $9$.
We will further discuss how to treat this problem efficiently in~\cref{sec:the_dual_description}.

\subsubsection*{Finding neighbouring forms}
\label{sub:neighbouring_forms}
Given a perfect form $Q$ and an extreme ray $R$ of $\P(Q)$ the algorithm needs to compute a scalar $\alpha > 0$ such that $Q' = Q + \alpha R$ is a perfect form with the same minimum.
Note that for all $x \in \Min(Q)$ we have $x^tRx \geq 0$ by definition of $R$ being an extreme ray of $\P(Q)$.
Those minimal vectors that correspond to the facets adjacent to the extreme ray satisfy $x^tRx = 0$ and therefore will also end up in $\Min(Q')$.
We need to find the minimal $\alpha > 0$ such that $Q+\alpha R$ lies on a new facet, i.e., the minimal $\alpha > 0$ such that $\Min(Q')$ contains a new vector not contained in $\Min(Q)$.
One can approach this by a binary search, i.e., for such a new vector $y \in \Min(Q') \setminus \Min(Q)$ we must have $y^tRy < 0$ and thus for $\alpha' > \alpha$ we have that either $Q+\alpha' R$ is not positive definite, or that $\lambda_1(Q+\alpha' R) < \lambda_1(Q)$.
In the latter case we can also compute any $y \in \Min(Q + \alpha' R) < \lambda_1(Q)$ and decrease $\alpha'$ to the point that it satisfies $y^t(Q+\alpha'R)y = \lambda_1(Q)$.
For $\alpha' < \alpha$ we have $\Min(Q+\alpha' R) \subset \Min(Q)$ and thus we can also detect this.
This binary search process can be further optimized in practice using LLL and a fast implementation for the enumeration of short vectors.
For a more extensive treatment of this step, both from a practical and an asymptotic point of view we refer to ~\cite[Chapter 5]{vanWoerdenMasterThesis}.

\subsubsection*{Isomorphism checks.}
\label{sub:isomorphism_check}
When a neighbouring perfect form is found we need to check if it is already equivalent to one in our list, and otherwise we should add it.
A naive approach would check the new form for equivalence with every element in the list but this quickly becomes infeasible.
This can be improved further by grouping the list on invariants such as the determinant of the form, however this is still not sufficient for our scale, for example some determinants occur hundreds or even thousands of times in our list of non-similar perfect forms.
We therefore use a canonical function that returns a canonical representative and a canonical hash of the equivalence class of a form.
For more information see~\cref{sub:can_lattices}.

\subsection{Implementation details}
We implemented Voronoi's algorithm in highly-optimized \cpp{} code with the usage of several existing \cc{} and \cpp{} libraries.
We have used both \textbf{LRS}~\cite{lrs} and \textbf{PPL}~\cite{BagnaraHZ08SCP} for the dual description of the plain dual description cases without symmetry.
For the medium-incidence cases we preferred the low-memory usage of \textbf{LRS}, while \textbf{PPL} was typically a bit faster for the low-incidence cases.
We used \textbf{Traces}~\cite{mckay2014practical,nauty} to compute a canonical form (see~\cref{sub:can_lattices}), \textbf{FLINT}~\cite{flint} for large integer support and the computation of the determinant and Hermite Normal Form of a matrix, and MurmurHash3 as a fast hashing function~\cite{Appleby_MurmurHash3}.

We stored the list of perfect forms in a binary \textbf{netCDF}~\cite{NetCDF_Software} file. For each perfect form we store the upper diagonal integer coefficients of the scaled canonical form $Q$, and metadata such as $|\Min(Q)|$, $\lambda_1(Q)$, $|\Aut(Q)|$, the number of neighbours, its hash, its parents hash, and if it was marked as treated already. This adds up to a total of $147$ bytes per form or roughly $329$GB in total, which drops to roughly an average of $67$ bytes and $150$GB respectively after compression.

The dual description cases under symmetry were done with a separate implementation which we will explain more in \cref{sec:the_dual_description}. The output of this is a list of orbits of extreme rays for each high-incidence case and we processed these with specialized code on a case by case basis.


\subsubsection*{Parallelism via MPI}
\label{subsub:voronoi_mpi}
In \cite{IsoEdgeSixDim} an enumeration of another kind of geometrical object was done using a parallelized algorithm.
We follow a similar approach and explain here our implementation.
Our implementation is parallelized via the Message Passing Interface (MPI), where each process is behaving independent and communicates with other processes via messages.
This allows to relatively easily scale to hundreds of processes on multiple nodes.

We use the canonical form mentioned earlier to compute a hash of each perfect form.
The hash is used for two purposes:
\begin{enumerate}
\item By computing the residue modulo $n_{proc}$ we can partition the forms on a number of processors.
\item We can use a hash-map to look up a form in $O(1)$ amortized time.
\end{enumerate}
The partitioning induced by the hash function allows to evenly distribute the data and the processing to the available number of processors.
As a result every process only keeps a limited amount of data on disk and in memory.

Each process first checks if there are new incoming messages  and further processes these when needed.
Typically these are forms sent by other processes and the hash-map is used to efficiently recover potential candidates that are equivalent, after which the canonical forms are compared and the received form is inserted or not depending on the outcome.
If all incoming messages are processed an untreated form is taken from the local list and its neighbouring perfect forms are computed.
When a new form is found the canonical form and the hash are computed locally after which it is sent to the appropriate node determined by the hash.
The sending is done in batches per target process to limit the message overhead.
Furthermore, the few high-incidence forms are more likely to be found many times and therefore each process also keeps a local list to reject those early.

The processes stop after a set amount of time and after having finished all the ongoing tasks and having processed all messages.
At this point all the data are saved to disk such that the algorithm can easily continue later.
We typically ran the algorithm on $64$ to $256$ processes for a few days at a time.
All perfect forms with $|\Min(Q)| \leq 2 \cdot 58$ were treated in this way, which is about $99.9991\%$ of all forms.

\section{Canonical forms}
\label{sec:canonical_forms}
Canonical forms play a crucial role in this work.
They allow to efficiently distinguish up to billions of orbit equivalence classes in many different settings.
The general setting we consider is a group $G$ acting on a set $X$ by some (left or right) group action $g \circ x$ for $x \in X, g \in G$.
This subdivides the set $X$ into several orbit equivalence classes, where $x \sim y$ for $x,y \in X$ if and only if $y = g \circ x$ for some $g \in G$.

Then, given some subset $S \subset X$, the computational problem is to determine (a single representative of) all orbit equivalence classes $T := \{ \text{Orb}(G,x) : x \in S \}$ in $S$. I.e., we want to remove all duplicates from $S$ up to equivalence.

Let $|S|, |T|$ be the size of $S$ and $T$ respectively. A naive approach to determine $T$ would require roughly $\frac{1}{2}|S|^2$ pairwise equivalence checks to remove any duplicates.
By building $T$ incrementally this can be improved to at most $|S|\times |T|$ pairwise equivalence checks.
In this work, in several different settings, we have to deal with sets of size $|S| \gg |T| > 10^9$, making neither approach feasible.

The idea instead is to use a canonical function $\Theta : X \to X$ that satisfies $\Theta(x) \sim x$ and
$$x \sim y \Leftrightarrow \Theta(x) = \Theta(y) \quad \text{for all } x,y \in X.$$
In other words, a canonical function returns a canonical representative for each orbit class, turning equivalence checks into equality checks.
One can thus simply compute $T = \{ \Theta(x) : x \in S \}$ using $|S|$ canonical function computations, and with an additional $O(|S|)$ amortized cost for removing duplicates using a hash-map.
Overall, if we can compute such a canonical function, we remove the time complexity in terms of $|S|$ from quadratic down to linear.
This makes the method practically feasible up to billions of elements.

We use such canonical functions in the context of lattice isomorphisms, permutation groups acting on sets, and polyhedral equivalences.

\subsection{Graphs}
One of the most researched areas for canonical functions is that of graph isomorphisms. Here we consider the set $X$ of all weighted complete graphs $(\{1, \ldots, n\},\omega)$ with $n$ vertices labeled $1, \ldots, n$, along with a weight function $\omega : \{1, \ldots, n\}^2 \to \RR$ indicating the edge weight $\omega(i,j)$ between vertices $i$ and $j$.
A permutation group $G = \Sym(n)$ acts on such a graph by permuting the labels (and changing the weight function appropriately).
Two graphs that lie in the same orbit under this action are called \emph{isomorphic}, i.e., this is the case if one can permute the vertices such that the two graphs attain exactly the same weights.
The \emph{graph isomorphism problem} asks to find such a permutation between two isomorphic graphs.

On the theoretical side, in a groundbreaking work, Babai~\cite{babai2016graph} showed that one can always solve the graph isomorphism problem in quasi-polynomial time $\exp(\log(n)^c)$ in the number of vertices.
Furthermore, this result was extended to a canonical function for graph isomorphism~\cite{babai2019canonical}.
More importantly for us, on the practical side, there are many highly optimized implementations of such canonical functions, such as the libraries \textbf{Nauty}, \textbf{Traces}~\cite{mckay2014practical,nauty} and \textbf{Bliss}~\cite{bliss}, that can work on graphs with up to thousands of vertices.
Typically these implementations are limited to a binary weight function with support $\{ 0, 1 \}$, but there exist efficient reductions to the binary case using $n \cdot \sqrt{\log_2(|\text{supp}(\omega)|)}$ vertices. See the \textbf{Nauty} manual~\cite{mckay2025nauty} for more details.

We will not be using graphs and graph isomorphism directly in this work, but we will use it as a tool to compute other canonical functions.

\subsection{Lattices and PQF's}
\label{sub:can_lattices}
Voronoi's algorithm finds many perfect forms, and an important requirement for the termination of the algorithm is to only keep those that are new, i.e., those that are non-equivalent to any other perfect form already in our database.
As explained in \cref{sub:quadratic_forms_and_the_hermite_constant}, the set of all PQF's of dimension $d$ can be described by $\S_{>0}^d$, and we have a (right) action by the group $\GL_d(\ZZ)$ of unimodular matrices given by $Q \circ U = U^t Q U$ for $Q \in \S_{>0}^d$ and $U \in \GL_d(\ZZ)$, defining an equivalence relationship.

In~\cite{PerfectDim8,vanWoerdenMasterThesis} the number of perfect forms enumerated was small enough to use a combination of some equivalence invariants, like the determinant, and pair-wise equivalence checks using the backtracking-search approach by Plesken and Souvignier~\cite{PleskenSouvignier}.
However, this approach is far from sufficient for the billions of perfect forms we have to handle, as many of them have overlapping invariants.

Motivated by this problem we developed a canonical function for quadratic form equivalence, which was first discussed in this context in~\cite{vanWoerdenMasterThesis} and later published in~\cite{CanonicalFormPositiveForm}.
The high-level idea is to reduce the canonical form computation of a PQF to that of a graph.
To see how this works let's consider an automorphism $U \in \Aut(Q)$ and the set of minimal vectors $\Min(Q)$ of a PQF $Q$. By definition we have $\Min(Q) = \Min(U^tQU)$, but also $\Min(U^tQU) = U^{-1} \cdot \Min(Q)$. Automorphisms thus induce a permutation on the set of minimal vectors. Moreover, this permutation is an isometry w.r.t. the inner product induced by $Q$, i.e.,
$$(U^{-1}x)^t Q (U^{-1}y) = (U^{-1}x)^t (U^tQU) (U^{-1}y) = x^t Q y \quad \text{ for all }x,y \in \Min(Q).$$
We can encode this in a weighted graph $\cG_Q$ as follows: pick some ordering $v_1, \ldots, v_m$ of the minimal vectors $\Min(Q)$, and define the weight function by $\omega(i, j) = v_i^t Q v_j$.
Clearly, any automorphism of $Q$ then induces an automorphism on the weighted graph $\cG_Q$. Moreover, if $Q' \sim Q$, then $\cG_Q \sim \cG_{Q'}$.

One can show that the reverse implication is also true if $\spa_{\ZZ}(\Min(Q)) = \ZZ^d$.
In that case the problem of PQF equivalence thus becomes one of graph equivalence. Furthermore, any canonical function for graph equivalence can in turn give a canonical form.
From our enumeration it follows that the property $\spa_{\ZZ}(\Min(Q)) = \ZZ^d$ is attained for all but one $9$-dimensional perfect lattice and thus using that set is almost always sufficient. We check for this condition and otherwise one has to consider a set of slightly larger vectors.
For more details we refer to~\cite{CanonicalFormPositiveForm}. A similar approach has later been used in~\cite{IsoEdgeSixDim} to build a canonical function for C-type domains.

To further improve the canonical function from~\cite{CanonicalFormPositiveForm} we also consider the absolute trick introduced in~\cite{van2023lattice,IsoEdgeSixDim}. The idea is that any minimal vector $x \in \Min(Q)$ always comes in a pair $\pm x$.
We therefore build an \emph{absolute graph} with only half the original vertices, typically corresponding to the minimal vectors up to sign $\pm x \in \Min(Q) / \{ \pm \}$, and by replacing the weight function by $w(i,j) = |v_i^t Q v_j|$ for $\pm v_i, \pm v_j \in \Min(Q) / \{ \pm \}$.
The absolute graph is half the size of the original graph and thus we can expect the canonical graph algorithms to be more efficient.

The main problem is that an isomorphism between absolute graphs does not always induce one between the corresponding PQF's.
One can however check this given the automorphisms of the absolute graph that one gets for free from the canonical form computation. If each of them lifts correctly to an automorphism of the PQF we still obtain a canonical form. If not one can still run the computation with the full graph, although this is rarely needed for the perfect forms we consider. For more details see~\cite[Chapter 9.7.2]{van2023lattice}.
When using the \textbf{Traces} library for computing the canonical function on a graph one obtains a speed-up of roughly a factor $3$ from the absolute trick as shown in \cref{tab:latcanonic}.
In the same table we also see that the computation of the absolute canonical form is only between a factor $1.1$ and $1.6$ times slower than the computation of the automorphism group using the program \textbf{AUTO} by Plesken and Souvignier~\cite{PleskenSouvignier}, which is comparable to doing a single isomorphism check.
\begin{table}
\caption{Average timing of regular and absolute canonical quadratic form computation over all $9$ dimensional perfect forms versus the computation of the automorphism group using the \textbf{AUTO} program~\cite{PleskenSouvignier}. The second row displays the weighted average by (orbit) number of neighbours which represents the ratio in which these forms are encountered during Voronoi's algorithm. The benchmarks were performed on a Xeon Gold SKL-6130 CPU at 2.1 Ghz.}
\label{tab:latcanonic}
\begin{tabular}{llll} \toprule
Forms & Canonical & Absolute & \textbf{AUTO} \\ \midrule
\numprint{2237251040} $9$-dim perfect forms & $913\unit{\us}$ & $304\unit{\us}$ & $194\unit{\us}$
 \\
 adjusted for \# neighbour orbits & $1184\unit{\us}$ & $427\unit{\us}$ & $372\unit{\us}$ \\
 \bottomrule
\end{tabular}
\end{table}

Lastly, we used the results by Watson~\cite{G1971} which claim that the $9$-dimensional perfect forms with $2 \cdot 129$ and $2 \cdot 136$ minimal vectors respectively are unique, to quickly differentiate those.

\subsection{Polytopes}
\label{sub:can_polytopes}
If two pointed polyhedral cones are combinatorially equivalent and this equivalence is known, one only has to solve the dual description problem for one of the cones.
From the equivalence one then directly obtains the dual description of the other cone.
In \cref{sub:saving_bank} we use this principle to limit the number of costly dual description computations.
Again we would like a canonical function to quickly determine if we have already seen an equivalent cone before.

As combinatorial automorphisms and equivalence are typically hard to compute we will only consider restricted linear equivalence.
For this we have a group of linear transformations $G = \GL_n(\RR)$ acting on sets $\{ v_1, \ldots, v_m \} \subset \RR^n$ of full-rank ray generators.
Just as for the equivalence of PQF's we consider a reduction to the case of weighted graphs following~\cite{GroupPolytopeLMS}.
In this work it is shown that if one computes the symmetric matrix $S = \sum_{i=1}^m v_i v_i^t$, and constructs an $m$-vertex graph with weight function $\omega(i,j) = v_i^t S^{-1} v_j$, that then restricted linear automorphisms and isomorphisms correspond to the respective automorphisms and isomorphisms on these graphs.

Again, by computing a canonical representative of the graph we can also construct a canonical representative for restricted linear equivalence.
The idea is that the canonical graph representative gives a canonical ordering of the extreme ray generators which is unique up to $\GL_n(\RR)$-linear transformations. By Gaussian elimination one then obtains a canonical representative.

This canonical function is also used to discover which of the high-incidence cones $\P(Q_1), \P(Q_2)$ for distinct perfect forms $Q_1, Q_2$ are equivalent.
For example, the \numprint{19155} perfect forms $Q$ with $|\Min(Q)| \geq 2 \cdot 59$, have only \numprint{7441} distinct Voronoi domains up to restricted linear equivalence.

\subsection{Permutation groups and face equivalence}
\label{sub:can_face}
Recall from \cref{sub:polytopes} that faces of a pointed polyhedral cone can be represented by subsets of $\{ 1, \ldots, m \}$ representing the $m$ extreme rays or $m$ facets, and that any automorphism group on the cone can be represented as a subgroup of $\Sym(m)$ acting on a face $S \subset \{ 1, \ldots, m \}$ by $\sigma \circ S = \{ \sigma(i) : i \in S \}$.
For computations related to face equivalence we can thus focus on permutation groups acting on subsets. These computations will be important in our algorithm to solve the dual description problem under symmetry further treated in~\cref{sec:the_dual_description}.

The computer algebra system \textbf{GAP}~\cite{GAP} implements many permutation groups algorithm which are very efficient and useful.
In particular the partition backtracking search~\cite{Leon1,Leon2} is very useful for testing equivalence of faces,
and for computing their stabilizer under the symmetry group of a polytope.
This was used in \cite{ComplexityVoronoiDSV,ContactLeech,CUT8_facet,montreal} for computing the dual description of a variety of polytopes, and in particular for completing Voronoi's algorithm in dimension $8$~\cite{PerfectDim8}.
For this work, working in \textbf{GAP} was not efficient enough and difficult to integrate with the rest of our {\tt C++} implementations.
Therefore, the \textbf{GAP} permutation group algorithms that were relevant to the dual description problem were translated to {\tt C++} in a new library \textbf{Permutalib}~\cite{permutalib}. This implementation was already successfully used in~\cite{BirkhoffDualDesc}.

In this work we have to determine the equivalence of up to billions of faces and thus again require a canonical function.
Because each orbit is finite one could in principle use the minimal orbit element under some predetermined ordering as a canonical representative.
An example of this is to use the standard lexicographic ordering.
However, the search for the minimal element under such an ordering can be very costly, depending on the properties of the group and the orbit.
One can improve this by (incrementally) determining a suitable ordering depending on the orbit in question, minimizing the backtracking search in the algorithm.
Such a canonical function was developed in~\cite{jefferson2019minimal} and implemented as a \textbf{GAP} package.
A generalization of the canonical function code from this \textbf{GAP} package was also translated to \textbf{Permutalib}~\cite{permutalib}.

For a group $G$ acting on subsets of $\{ 1, \ldots, m \}$, a subset $S \subset \{ 1, \ldots, m \}$ and a subgroup $H\subset G$ stabilizing $S$, it computes a canonical function $\text{Can}(G, H, S)$ for $S$.
For $\sigma\in G$ this canonical function has the following property:
$$\text{Can}(G, \sigma H \sigma^{-1}, \sigma \circ S) = \text{Can}(G, H, S).$$
The group $H$ is used to limit the backtracking search tree in the algorithm, but has to be fixed a priori in a canonical way.
In our context, we have two possibilities for the subgroup $H$: the stabilizer $\Stab(G,S)$ of $S$ under $G$ (the choice in~\cite{jefferson2019minimal}) \emph{or} the trivial subgroup. The corresponding canonical functions can be different.
If the stabilizer is large and one uses the trivial subgroup we could be exposed to memory and runtime problems. If one uses $\Stab(G, S)$ then this problem can be addressed but the computation of the stabilizer is expensive.

What we do is use the canonical function with a trivial stabilizer first. In most cases, the stabilizer is trivial so this approach is sufficient. The explosion in the number of cases in a layer of the search tree is handled in the following way.
If the number of cases exceed a specified threshold of $500$, then we compute the stabilizer and compute the canonical function for this stabilizer. Since the number of cases in a layer of the search tree is invariant under conjugation, this is indeed a canonical function.

\section{The dual description problem under symmetries}
\label{sec:the_dual_description}
Just as in dimension $8$, the dominating cost of Voronoi's algorithm in dimension $9$ is the dual description problem for the few polyhedral cones $\P(Q)$ of high incidence, or equivalently the dual description for their Voronoi domains $\Vor(Q) = \P(Q)^\circ$.
In this section we explain our computational methods and implementation efforts for solving the dual description problem under symmetries, which could be of independent interest.
For ease of intuition, we will consider the computation of an H-representation up to symmetries given a V-representation $V \in \QQ^{n \times m}$ of the cone $P = \{ V \lambda \in \RR^n : \lambda \in \RR_{\geq 0}^m \} $. Recall that by duality this is equivalent to the dual description problem in the other direction.
The symmetry group $G$ is assumed to be represented as a subgroup of the permutation group $\Sym(m)$ acting on the $m$ extreme rays.

\subsection{Recursive Adjacency Decomposition Method}
To compute the dual description under symmetries we use the \emph{Recursive Adjacency Decomposition Method}.
The Adjacency Decomposition Method (ADM) has been reinvented many times for example by Jaquet~\cite{jaquet1993enumeration} for the treatment of $\Vor(Q_{\Elat_7})$ in Voronoi's algorithm and by Christof and Reinelt~\cite{CR_small_polytopes} for cut polytopes, traveling salesman polytopes and the linear ordering polytopes. The algorithm and its name were formally introduced in~\cite{CR_decomposition_parallelization}.

On a high-level the idea behind the algorithm is similar to the graph traversal algorithm behind Voronoi's algorithm, which enumerates all perfect forms by repeated finding neighbouring perfect forms.
For a pointed polyhedral cone $P$ we call two faces $F_1, F_2$ of dimension $k > 0$ \emph{adjacent} if the face $R = F_1 \cap F_2$ has dimension $k-1$, i.e., if $\dim(F_1 \cap F_2) = \dim(F_1)-1 = \dim(F_2)-1$.
When considering the facets of $P$ as graph vertices\footnote{For polytopes and in a dual setting, the notions of vertices and edges for this adjacency graph coincide with their polytope counterparts, just as for Voronoi's algorithm in the setting of Ryshkov polyhedron.}, and their adjacency as edges, the facets form a connected graph which, just as for Voronoi's algorithm, can be explored up to equivalence.
The Adjacency Decomposition Method thus proceeds as follows: starting with a single facet, it will repeatedly enumerate all adjacent facets and keep those that are new up to equivalence, until all facets have been treated. See \cref{alg:ADM}.

An important step in the algorithm is the determination of all adjacent facets $F_1, \ldots, F_N$ of a facet $F$.
These are in a one-to-one correspondence with the ridges $R_1, \ldots, R_N$ of $P$ for which $R_i = F_i \cap F$. Given any ridge $R_i$ included in $F$ one can uniquely determine $F_i$ in a process called \emph{flipping} which we will detail in \cref{ssub:flipping_modulo_q}.
We can thus focus on obtaining the ridges $R_i$ of $P$ which are included in the facet $F$. When viewing the facet $F$ as a subcone (restricted to its linear span), the ridges $R_1, \ldots, R_N$ are precisely the \emph{facets} of $F$.
The determination of the ridges $R_1, \ldots, R_N$ is thus again a dual description problem, but of the subcone $F$ which is in a dimension lower, and given by only those extreme rays of $P$ that are contained in $F$. This typically leads to much easier instances that can be solved by standard dual description algorithms. See the face lattice in \cref{fig:RADM} which indicates the adjacent facets and the subcone defined by a facet.
In several cases these subcones might still be hard to process, therefore~\cite{PerfectDim8} extended the method to a recursive one.
On a closer inspection, it is sufficient to enumerate the facet orbits of $F$ up to the symmetries induced by $\Stab(G, F) \subset G$, i.e., any facets $R_i$ of $F$ in the same orbit under $\Stab(G, F)$ will lead to an adjacent facet of $P$ in the same orbit under $G$.
This is thus again a dual description problem under symmetry which we can solve using ADM.
This recursive approach allowed the treatment of the Voronoi domain $\Vor(Q_{\Elat_8})$ of the $\Elat_8$ root lattice in about a month of computation, and thereby led to the successful classification of all perfect forms in dimension $8$.

\begin{figure}
  \includegraphics[width=0.7\textwidth]{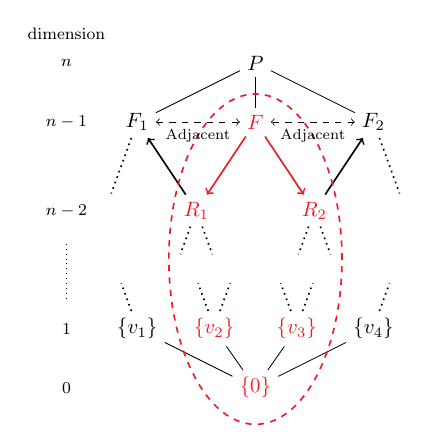}
  \caption{This figure shows an abstract view of (part of) the face lattice of a pointed polyhedral cone $P$ of dimension $n$. Face inclusions are indicated with solid lines. The facet $F$ is adjacent to $F_1$ and $F_2$ as they share ridges $R_1$ and $R_2$ respectively. The facet $F$ defines a subcone indicated in red with extreme rays $v_2, v_3 \in F$. The ridges $R_1$ and $R_2$ of $P$ are facets of $F$.}
  \label{fig:RADM}
\end{figure}

\begin{center}
\begin{algorithm}[h!]
\caption{Recursive Adjacency Decomposition Method~\cite{CR_decomposition_parallelization,PerfectDim8}} \label{alg:ADM}
\begin{algorithmic}[1]
\Require A V-representation $V \in \QQ^{n \times m}$ of a pointed polyhedral cone $P$, and a group $G \subset \Comb(P) \subset \Sym(m)$.
\Ensure A complete list $\cF$ of $G$-inequivalent facet of $P$.
\State $\cF \gets \{ F_{\text{start}} \}$ \Comment{with $F_{\text{start}}$ any facet of $P$.}
\While{$\exists$ untreated $F \in L$}
  \State Compute the facets $R_1, \ldots, R_N$ of $F \subset \spa_\RR(F)$. \Comment{direct or recurse}
  \ForAll{$R_i$ (up to $\Stab(G, F)$)} \Comment{ridges of $P$ included in $F$}
    \State Compute unique facet $F_i$ of $P$ such that $R_i = F \cap F_i$. \Comment{flipping}
    \If{$F_i$ is not equivalent to any facet in $\cF$}
      \State $\cF \gets \cF \cup \{ F_i \}$.
    \EndIf
  \EndFor
  \State Mark $F \in \cF$ as treated.
\EndWhile
\State \Output $\cF$

\end{algorithmic}
\end{algorithm}
\end{center}

\subsection{Algorithmic improvements}
We will now discuss some further algorithmic improvements.

\subsubsection{Canonical face representative}
Just as in Voronoi's algorithm one needs to differentiate efficiently between up to billions of facet orbits.
The idea of using a canonical function for this was already introduced in~\cite{CR_decomposition_parallelization}.
In~\cite{vanWoerdenMasterThesis} it was suggested to use the improved canonical function function from~\cite{jefferson2019minimal}.
We implemented this canonical function, with further improvements as detailed in \cref{sub:can_face}, and used it in our Recursive ADM algorithm.

\subsubsection{Early termination criterion}
The computation with the dual description requires computing the adjacent facets of each facet orbit.
This computation is done from the orbits with lowest incidence till the one of highest incidence.
It is often the case that the orbits of highest incidence are the most computationally intensive, but do not give any new orbits.
If we would not treat these cases we are however not sure if we have indeed found all orbits, as some orbits might only be connected to these.
Therefore, it is very useful to have criteria that guarantees we have found all orbits, and thus allow for an early termination of the enumeration.

\begin{theorem}[Connectivity theorem~\cite{BalinskiTh}]\label{thm:balinski}
Let $P$ be a pointed polyhedral cone of dimension $n$ and $\cG$ its facet adjacency graph. Then the removal of any $n-2$ facets from $\cG$ leaves the graph connected.
\end{theorem}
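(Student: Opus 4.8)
The plan is to derive the statement — often called Balinski's theorem in the dual (polytopal) setting — from the connectivity of the graph of a polytope, using the duality between pointed cones and polytopes recalled in Section~\ref{sub:polytopes}. First I would pass to a polytope: intersecting the pointed cone $P$ with a generic affine hyperplane $H$ that meets every extreme ray yields a polytope $P' = P \cap H$ of dimension $n-1$, whose vertices correspond to the extreme rays of $P$ and whose facets correspond to the facets of $P$. Crucially, the facet-adjacency graph $\cG$ of $P$ then coincides with the facet-adjacency graph of $P'$, which by duality is the vertex-edge graph of the dual polytope $(P')^{\circ}$, a polytope of dimension $n-1$. So the statement to prove becomes: the $1$-skeleton of an $(n-1)$-dimensional polytope remains connected after deleting any $n-2$ vertices. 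This is exactly the classical theorem of Balinski, that the graph of a $k$-dimensional polytope is $k$-connected, applied with $k = n-1$.

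Next I would give the (short) proof of Balinski's theorem itself, since the excerpt only cites it. Let $Q$ be a $k$-dimensional polytope and let $S$ be a set of $k-1$ vertices; I must show $G(Q) \setminus S$ is connected. The argument is by induction on $k$, the base cases $k \le 1$ being trivial. Pick any vertex $w \notin S$ and let $\varphi$ be a non-constant linear functional that vanishes on $w$ and on as many vertices of $S$ as possible; since $\dim Q = k$ and $|S| = k-1$, one can choose $\varphi$ vanishing on $w$ and, if $S$ spans an affine subspace of dimension $\le k-1$ together with $w$, I instead select $\varphi$ so that either (i) $\varphi$ takes a strict maximum or minimum not attained on $S$, or (ii) $\varphi \equiv c$ on some vertices including part of $S$. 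In the standard write-up one distinguishes two cases: if some functional $\varphi$ is constant on all of $S$, then the two faces $\{\varphi \text{ maximal}\}$ and $\{\varphi \text{ minimal}\}$ are lower-dimensional polytopes avoiding $S$ (since $S$ lies on the hyperplane $\varphi = c$ with $c$ strictly between), each of their graphs is connected, and every remaining vertex can be joined into one of them by a monotone edge-path for $\varphi$ that never revisits the level $c$ except possibly at its endpoint — careful bookkeeping shows it can avoid $S$; if no functional is constant on $S$, pick $\varphi$ vanishing on $w$, let $T$ be the (nonempty) top face $\{\varphi = \max\}$, which is a polytope of dimension $\le k-1$ containing no vertex of $S$ with $\varphi$-value equal to the max — then $G(T)$ is connected (by induction, or trivially), every vertex of $Q \setminus S$ reaches $T$ by a $\varphi$-increasing path avoiding $S$, so the whole graph minus $S$ is connected through $T$.

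Finally I would translate back: the $k$-connectivity of $G((P')^{\circ})$ with $k = n-1$ says precisely that removing any $n-2$ of its vertices leaves it connected, i.e.\ removing any $n-2$ facets of $P'$ (equivalently of $P$) from $\cG$ leaves $\cG$ connected, which is the claim.

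The main obstacle is the combinatorial heart of Balinski's argument: showing that the monotone edge-paths used to connect an arbitrary vertex to the chosen top (or bottom) face can be taken to avoid the deleted set $S$. One has only $k-1$ deleted vertices and $k$ "independent directions" worth of room, and the clean way to exploit this is the Menger/flow reformulation — it suffices to exhibit, for any two vertices $u,v \notin S$, a set of $k$ internally vertex-disjoint $u$–$v$ paths — or, as above, a direct inductive peeling of a top face; either way the bookkeeping that the paths stay in $G(Q)\setminus S$ is where all the care goes. Everything else (the cone-to-polytope dictionary, the facet/vertex duality, the induction skeleton) is routine given the material already set up in Section~\ref{sub:polytopes}.
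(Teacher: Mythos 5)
The paper itself gives no proof of this statement; it simply cites Balinski's theorem (\cite{BalinskiTh}, with Gr\"unbaum's proof~\cite{grunbaum1967convex} invoked a few lines later), so there is no in-paper argument to compare against. Your reduction from the cone statement to the polytope statement is the expected and correct route: intersect the full-dimensional pointed cone $P \subset \RR^n$ with a transversal hyperplane to obtain an $(n-1)$-dimensional polytope $P'$ whose facet-adjacency graph equals $\cG$, pass to the polar so that $\cG$ becomes the $1$-skeleton of an $(n-1)$-polytope, and apply Balinski's theorem that a $d$-polytope graph is $d$-connected with $d = n-1$; this gives exactly the bound $n-2$.

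The gap is in your sketch of Balinski's theorem itself. The case dichotomy you state — ``some functional $\varphi$ is constant on all of $S$'' versus ``no functional is constant on $S$'' — is not a dichotomy: since $|S| = k-1 < k = \dim Q$, the set $S$ together with any one extra vertex $u \notin S$ always lies on an affine hyperplane, so a non-constant affine functional $\varphi$ constant on $S \cup \{u\}$ always exists, and your second case is vacuous. The correct split is on whether the common value $c = \varphi(S)$ lies strictly between $\min_Q \varphi$ and $\max_Q \varphi$ or coincides with one of them. In the strict case you also omit how the top face $F^+$ and the bottom face $F^-$ are joined to \emph{each other} inside $G(Q)\setminus S$: your monotone paths send some vertices to $F^+$ and others to $F^-$, which a priori gives two components. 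This is precisely the role of the extra vertex $u$: being at level $c$ strictly between the extrema and hence on neither $F^+$ nor $F^-$, it has both a strictly increasing and a strictly decreasing edge, giving monotone paths avoiding $S$ to each of $F^+$ and $F^-$. In the boundary case (say $c = \max$), $F^-$ misses $S$ entirely, and every vertex outside $S$ — including those at level $c$ — reaches $F^-$ by a non-increasing path whose first step strictly decreases and thus immediately escapes $S$. You honestly flag the bookkeeping as the obstacle, but the mis-stated case-split would stop a reader trying to complete the argument as written.
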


Therefore, if the set of facets in untreated orbits has size at most $n-2$ then the untreated orbits do not need to be treated since they would not give anything new.
This strategy has first been used in \cite{rigid_dim_six}.
If one inspects the proof of Balinski's theorem then one sees that only the dimension is used.
I.e., given a set of facets and their facet-defining normal vectors $y_1, \ldots, y_N$, if these vectors span a subspace of rank at most $n-2$,
then the removal of those facets leaves the graph connected. Therefore if the facets in the untreated orbits have this property we can terminate the enumeration.

Looking even closer into the proof of~\cref{thm:balinski} given by Gr{\"u}nbaum~\cite{grunbaum1967convex}, there is a slightly stronger statement in case the set of facets contain a common extreme ray. In the context of the dual description problem this was mentioned in \cite[Theorem 1]{CUT8_facet}: if a set of facets all contain the same extreme ray, then removing them leaves the adjacency graph connected.

Note that the difference with the general Balinski statement is that these facets contain a \emph{common face} of dimension $1$, while for Balinski's statement the unique hyperplanes going through each facet must have a \emph{common subspace} of dimension $2$, that does not have to define a face.

\subsubsection{Extra symmetries and double coset decomposition}
\label{ssub:extra_symmetries_and_double_coset_decomposition}
While we typically need to compute the dual description under a certain symmetry group, for example those induced by $\Aut(Q)$, there might be more symmetries in the resulting cones.
When computing the dual description of such a complex cone, it is essential to use as many symmetries as possible.
An example of this is the Voronoi domain $\Vor(Q_{\Lambdalat_9})$, which has only \numprint{5160960} automorphisms induced by $\Aut(Q_{\Lambdalat_9}$, but \numprint{660602880} restricted linear automorphisms.
The same can be true in the Recursive ADM algorithm, when treating a facet $F$ recursively one needs to enumerate its facet orbits up to $\Stab(G,F)$.
However, the subcone induced by $F$ might have additional symmetries.
We try to find such extra symmetries whenever we think the dual description could be difficult, based on some heuristic rules.

This leaves one problem: we need to compute the dual description for a specified symmetry group. That is, we need to switch from the list of orbits up
to a symmetry group $G_1$ to the list of orbits for a smaller symmetry group $G_2$.
The method is the following. For an orbit $G_1x$ we compute the stabilizer $\Stab(G_1, x)$
of $x$ under $G_1$. Then we find the double cosets decomposition:
\begin{equation*}
G_1 = \cup_{i=1}^{N} G_2 g_i \Stab(G_1,x) \mbox{ for some } g_i \in G_1.
\end{equation*}
The orbit $G_1x$ then splits into the orbits $G_2 g_1 x, \ldots, G_2 g_N x$.

We used a number of algorithms for computing this orbit splitting. One is making the
generators of $G_1$ act on $x$ and test for $G_2$ equivalence by using the permutation
backtrack algorithm. This usually works, but sometimes the total number of orbits
is incomplete. This is detected by keeping track of the total number of elements.
If some orbits are missing, then we generate all the elements of $G$ until we have found
all the orbits.

Another technique is to compute the cosets $g_i$ of $G_2$ over $G_1$ so that $G_1 = \cup_{i=1}^N G_2 h_i$.
We then form the orbits $G_2 h_i x$ and eliminate by the partition backtrack, or by our canonical function as explained in~\cref{sub:can_face}, the ones that are isomorphic. This works well if the index of $G_2$ in $G_1$ is low.
These approaches were sufficient for our purpose.

After the enumeration of this work was completed, we finally implemented ascending chains of subgroups and double coset enumeration in \textbf{Permutalib}~\cite{permutalib} inspired by the implementation of \textbf{GAP}~\cite{GAP}. This turns out to be in most cases the most efficient technique.

\subsubsection{Saving Bank}\label{sub:saving_bank}
Suppose that we compute the dual description of facets $F_1$ and $F_2$ by using the adjacency decomposition method and that $F_1$ and $F_2$ share a common ridge $R$.
Then the dual description of $R$ could be computed two times which is a waste of
resources.
In case the algorithm recurses the dual description of lower-dimensional faces this could even increase more.
Furthermore, some of the ridges $R$ could simply be equivalent as a polyhedral cone and thus have equivalent dual descriptions.
Therefore, we have a memoization process which stores the computed dual descriptions in a \emph{saving bank} if they were expensive to compute, following the approach from~\cite{PerfectDim8}.
To quickly determine if an equivalent dual description has been stored already, up to restricted linear equivalence, we use the canonical form as detailed in \cref{sub:can_polytopes}.
If the dual description under symmetry has been stored with a different symmetry group we use the double coset decomposition techniques treated in~\cref{ssub:extra_symmetries_and_double_coset_decomposition}.

A good example of the usefulness of the saving bank can be seen in \cref{fig:face_lattice_high_incidence}, where the high-incidence ridge with $83$ extreme rays is up to equivalence shared by $6$ different facet orbits of $\Vor(Q_{\Lambdalat_9})$, and even by other cones like $\Vor(Q_{99})$ and $\Vor(Q_{129})$. Also some of the facets are shared between these cones up to equivalence.
For the most complex cases we treated, thousands of dual descriptions were stored and reused. This data also allowed for a closer inspection of the face lattice as for example shown in \cref{fig:face_lattice_high_incidence} and used in the proof of \cref{thm:poss_kissing_numbers}.

\subsection{Implementation details and improvements}
The algorithmic improvements alone were not sufficient, so we will now discuss some of the implementation details and improvements.
Ignoring the improvements to parallelism, our implementation effort along with the algorithmic improvements reduced the dual description computation for $\Vor(\Elat_8)$ from several months, as reported in~\cite{PerfectDim8}, down to only $9$ hours on a single core. Additionally, our highly parallel implementation made the much larger cases brought up by Voronoi's algorithm in dimension $9$ feasible, on which we will report more in \cref{sec:results}.

\subsubsection{\cpp{} implementation}
For the classification of the perfect forms of dimension $8$ the main bottleneck was the dual description of $\Vor(\Elat_8)$ which was performed using an implementation of Recursive ADM in the Polyhedral package~\cite{Polyhedral} in the \textbf{GAP} system for computational discrete algebra~\cite{GAP}.
While \textbf{GAP} has very efficient algorithms for permutation groups it is not sufficiently fast and capable for the large scale of our computation.
Therefore, as part of this work the Recursive ADM algorithm, along with all necessary permutation group features of \textbf{GAP}, have been ported to a \cpp{} implementation. This implementation has been developed publicly at~\cite{PolyhedralCpp}.
It depends on the \textbf{Eigen}, \textbf{GMP} and \textbf{Boost}~\cite{eigenweb,GMP3,boostweb} libraries for handling matrices, exact arithmetic, and many miscellaneous functions such as serialization respectively. Furthermore, it depends on \textbf{Nauty}~\cite{nauty} for the computation of canonical graphs, and uses \textbf{PPL}~\cite{BagnaraHZ08SCP} or \textbf{LRS}~\cite{lrs} for basic dual description computations.

\subsubsection{Incidence to face-defining inequality conversion modulo $q$.}
\label{ssub:flipping_modulo_q}
Recall that for a facet $F$ and a ridge $R_i$ contained in $F$, we can obtain an adjacent facet $F_i$ to $F$ satisfying $R_i = F \cap F_i$ in a process called flipping.
The idea is similar to that of finding neighbouring forms in Voronoi's algorithm, but simplified because there are only a finite number of extreme rays. First consider face-defining inequalities $f(x) := \langle y, x \rangle \geq 0$ and $f_i(x) := \langle y_i, x \rangle \geq 0$ represented by $y, y_i \in \RR^n$ for $F$ and $R_i$ respectively.
To determine the facet-defining inequality of $F_i$ we then simply have to find the minimal scalar $\beta < 0$ such that $f_i(v_i) + \beta \cdot f(v_i) \geq 0$ for all extreme rays $v_1, \ldots, v_m$ of $P$. Note that for this one only needs to consider those extreme rays that are not already in $F$.

When computing the ridges $R_i$ with a direct dual description method that does not consider any symmetries, such as with \textbf{PPL}~\cite{BagnaraHZ08SCP} or \textbf{LRS}~\cite{lrs}, one often directly obtains both the extreme rays included in $R_i$, and additionally the face-defining inequality $f_i$ of $R_i$. The flipping process, which uses both these data, can then immediately be applied.

However, in the more complex settings where we only have a large list of orbit representatives for the ridges $R_i$ up to $\Stab(G,F)$, we typically only keep the former representation due to storage constraints.
Storing only a bitstring of length $|F|$ is much more efficient than additionally storing a face-defining inequality in $\QQ^n$.
We thus need a method to quickly obtain a face defining inequality from the list of extreme rays included in the face.
In principle, this can be done by computing the kernel of the matrix given by those extreme rays.
This is a computation that has to be executed many times and computing the kernel of a rational matrix is relatively slow, especially because we need to use arbitrary precision rationals for correctness.

We noticed however for our cases that the face-defining inequalities can typically be given by integer vectors with relatively small entries.
Therefore, we computed a kernel vector $a \in \FF_p^n$ modulo some large prime $p \approx 2^{31}$, which is much faster as operations over this field can be implemented efficiently using $64$-bit integer arithmetic.
Then, using rational lifting, via continuous fraction methods~\cite{wang1982p} or Lagrange reduction, we lift each vector coefficient $a_i$ in $\FF_p$ to an appropriate rational $\frac{b_i}{c_i} \in \QQ$ such that $a_i = b_i c_i^{-1} \bmod{p}$ and such that $b_i$ and $c_i$ are small ($|b_i|, |c_i| < \sqrt{p}$). We then remove the common denominator to obtain an integer vector.
Typically this vector lies in the kernel which we check. In case the check fails we fall back to the original computation over the rationals, however we have not observed that this was required.
The resulting conversion technique leads to a $10\times$ or larger speed-up compared to the original approach.

\subsubsection{Parallel adjacency method via MPI}
Just as for our implementation of Voronoi's algorithm we used MPI~\cite{MPIforum} to scale the algorithm up to hundreds of processors.
On a high level our approach is also similar to our approach there so we will be brief about it.
We mostly parallelized the Recursive ADM algorithm on the top level, i.e., the facet orbits are distributed over the processors and treated separately.
The canonical function to represent the facets detailed in ~\cref{sub:can_face} is, along with a hash function, used to send each facet to the appropriate processors, just as done with the canonical perfect forms.

This was sufficient for most of our cases. For some of the complex high-incidence cases we manually treated a few of the lower-rank faces separately in parallel, if they were too much to handle by a single processor. The saving bank system allowed for an easy way to load these back into the main computation.

When using $n$ processors we would typically leave one processor to handle requests to the saving bank. Other processors could then query this processor and submit or receive the dual description of complex cases.

Our implementation can run for a specific amount of time, after which the progress is saved, and from which one can resume the computation later.
We typically used $16$ to $256$ cores for the more complex cases of high incidence.

\section{Results}
\label{sec:results}
In this section we will consider the cost of our computations, some data resulting from our enumeration, and the corollaries resulting from those.
Many of the questions we try to answer are inspired by a collection of open problems posed by Martinet~\cite{Martinet2015}.
We will generally denote forms by $Q_{a[,b]}$, where $a$ indicates half the number of minimum vectors, and $b$ optionally indicates the specific lattice it is related to or the size of its automorphism group.

\subsection{Classification of perfect forms in dimension 9}
We completely finished the execution of Voronoi's algorithm and thereby found a complete classification of \numprint{2237251040} non-similar perfect forms in dimension $9$, leading to \cref{thm:perfect_and_extreme_forms}.
Among this complete list of perfect lattices the laminated lattice $\Lambdalat_9$~\cite{chaundy1946arithmetic,conway1982laminated} is the densest lattice. By Voronoi's theory, in particular~\cref{lem:extreme_eutactic}, we can conclude from this in~\cref{cor:lambda9_is_densest} that $\Lambdalat_9$ is the unique densest lattice packing in dimension $9$ and we have the Hermite constant $\gamma_9 = 2$.

We now consider some further properties of the classification.
In \cref{tab:minQ} we show the number of non-similar forms by their kissing number $|\Min(Q)|$.
From $|\Min(Q)| \geq 2 \cdot 63$ this matches the partial enumeration from~\cite[Table 1]{vanWoerdenMasterThesis}, so no new high-incidence forms were found. This is expected as the high-incidence forms are often highly connected and found early.
Just as in dimension $8$ the minimal kissing number $n=\frac{1}{2}d(d+1)$ for a $d$-dimensional perfect form is attained by most, more than half, of the perfect forms. Above this we roughly see an exponential decrease as the kissing number increases. As was already known by Watson~\cite{G1971}, the maximum kissing number in dimension $9$ is $2 \cdot 136$ attained by $\Lambdalat_9$.

In \cref{tab:autQ} in the Appendix we display the number of non-similar forms by the order $|\Aut(Q)|$ of their automorphism group. More than $96.7\%$ of them have a trivial automorphism group of order $2$, and more than $99.9\%$ of them have an order of at most $4$. The largest automorphism group has order $2^9 \cdot 9!$ and is attained by the root lattice $\Dlat_9$~\cite[p.~117]{conway2013sphere}.

In \cref{tab:scaleQ} in the Appendix we display the number of non-similar forms by their \emph{scale}. The scale of a form is the (minimal) value of the first-minimum $\lambda_1(Q)$ when the form $Q$ is scaled to be integral. We see that the scale is at most $168$. As a result the coefficients of $Q \in \S_{>0}^9(\ZZ)$ (when reduced, or for example after picking a basis of minimum vectors when possible) are reasonably small and can be stored quite efficiently.

To determine the number of extreme forms we computed for each perfect form their eutaxy following~\cref{lem:extreme_eutactic}.
Recall that a form $Q$ is eutactic if there exist $\lambda_x > 0$ for $\pm x \in \Min(Q)$ such that we have a eutaxy relation $Q^{-1} = \sum_{\pm x \in \Min(Q)} \lambda_x x x^t$.
We used exact arithmetic to compute $Q^{-1}$ and a linear program that maximizes $\lambda$ with the constraints $\lambda_x \geq \lambda$, to compute if such a relation exists. If $\lambda \approx 0$ we ran a linear program with exact arithmetic.
The results are shown in \cref{tab:eutaxy} in the Appendix.
There are \numprint{7338582} eutactic, and thus extreme non-similar perfect forms in dimension $9$. Among these there are $10$ which are strongly eutactic (all $\lambda_x > 0$ identical).
There are also \numprint{5843} perfect forms which are only semi-eutactic (some $\lambda_x = 0$). Among these there are $2$ where the remaining non-zero $\lambda_x$ are identical which we, in line with the naming convention, will call \emph{strongly semi-eutactic}. We do not know of an efficient way to determine if a form is strongly semi-eutactic so there might be more than these $2$.

\begin{table}
\centering
\caption{The number of non-similar forms by half their kissing number $\frac{1}{2}|\Min(Q)|$.}\label{tab:minQ}
\begin{tabular}{llllll}\toprule
$\frac{1}{2}|\Min(Q)|$ & \# & $\frac{1}{2}|\Min(Q)|$ & \# & $\frac{1}{2}|\Min(Q)|$ & \# \\ \midrule
$45$ & $1353947672$ & $61$ & $2244$ & $77$ & $1$ \\
$46$ & $471756975$ & $62$ & $1713$ & $78$ & $1$ \\
$47$ & $267588732$ & $63$ & $641$ & $79$ & $2$ \\
$48$ & $84473357$ & $64$ & $634$ & $80$ & $12$ \\
$49$ & $37278163$ & $65$ & $236$ & $81$ & $3$ \\
$50$ & $13324560$ & $66$ & $203$ & $82$ & $4$ \\
$51$ & $5299974$ & $67$ & $172$ & $84$ & $2$ \\
$52$ & $2009292$ & $68$ & $74$ & $85$ & $2$ \\
$53$ & $903943$ & $69$ & $44$ & $88$ & $1$ \\
$54$ & $366796$ & $70$ & $42$ & $90$ & $2$ \\
$55$ & $155182$ & $71$ & $26$ & $91$ & $1$ \\
$56$ & $78919$ & $72$ & $21$ & $99$ & $1$ \\
$57$ & $31113$ & $73$ & $7$ & $129$ & $1$ \\
$58$ & $17207$ & $74$ & $3$ & $136$ & $1$ \\
$59$ & $8231$ & $75$ & $4$ &  &  \\
$60$ & $4820$ & $76$ & $6$ &  &  \\ \bottomrule
\end{tabular}
\end{table}

\subsection{The computational cost and high-incidence cases}
We started Voronoi's algorithm with an existing list of \numprint{100000} perfect forms with $2 \cdot 45$ minimal vectors. This allowed to make full use of the parallelism from the start.
In about \numprint{100000} core hours on the (by now inactive) Lisa National Compute Cluster of SURF and the Curta cluster of MCIA between $2021$ and $2024$, we treated all low-incidence perfect forms with kissing number at most $2 \cdot 58$ with standard dual description computations.

What remained were the \numprint{19155} perfect forms of medium or high incidence, with up to restricted linear equivalence \numprint{7441} distinct polyhedral cones to compute the dual description of.
For these cases we used our parallel implementation of the Recursive Adjacency Decomposition Method as explained in~\cref{sec:the_dual_description}.
Depending on the case we ran this method with $1$ to $256$ processes.
The total cost for computing the dual description under symmetry for these cases was about $2$ million core hours on the Curta cluster, which is almost $3$ months of continuous computation on $1000$ cores.
Including all development, preliminary runs, and related computations, we estimate the total cost at about $3$ million core hours.
These computations mainly took place from $2023$ to $2024$.
Of those $2$ million core hours of computation for the dual description, about $1.5$ million core hours were spent on only $10$ high-incidence cases which are displayed \cref{tab:cost_of_dual_description_cases}.

Surprisingly, the highest-incidence case with incidence $136$ corresponding to the Laminated lattice $\Lambdalat_9$, was not the most computationally intensive.
We see in \cref{tab:cost_of_dual_description_cases} that this is mainly the case due to the many extra linear automorphisms of the cone, i.e., its automorphism group is a factor $128$ larger than the subgroup induced by $\Aut(Q_{136,\Lambdalat_9})$, as was already observed in~\cite{vanWoerdenMasterThesis}.
Furthermore, the perfect form with incidence $129$ does not even appear in this list as its dual description easily follows from that of $\Elat_8$, again see~\cite{vanWoerdenMasterThesis}.

The hardest case was a perfect form with the relatively low incidence of $76$, but with a small automorphism group and no additional restricted linear automorphisms. We computed part of the face lattice to find more linear automorphisms but we found none. This case required more than \numprint{350000} core hours and gave \numprint{1549616491} orbits.

\begin{table}
\centering
\caption{Cost of dual description cases with more than $50$k core hours. These cases account for $1.5$ million of the total amount of $2$ million core hours spent on dual description instances.}\label{tab:cost_of_dual_description_cases}
\footnotesize
\begin{tabular}{rrrrrr}\toprule
$\frac{1}{2}|\Min(Q)|$ & Core hours & $|\text{linaut}(P)|$ & rays (orbits) & $|\text{aut}(Q)|$ & neighbours (orbits) \\ \midrule
$136$ & \numprint{59277} & \numprint{660602880} & \numprint{64001686} & \numprint{10321920} & \numprint{1038153863} \\
$84$ & \numprint{75467} & \numprint{12288} & \numprint{171496157} & \numprint{384} & \numprint{1514557045} \\
$99$ & \numprint{84197} & \numprint{589824} & \numprint{137739671} & \numprint{18432} & \numprint{1842205495} \\
$90$ & \numprint{85349} & \numprint{73728} & \numprint{185824962} & \numprint{2304} & \numprint{2058568310} \\
$74$ & \numprint{95784} & \numprint{128} & \numprint{333146387} & \numprint{16} & \numprint{1257559244} \\
$80$ & \numprint{97118} & \numprint{7680} & \numprint{108828919} & \numprint{480} & \numprint{764775430} \\
$81$ & \numprint{181570} & \numprint{1296} & \numprint{254734260} & \numprint{2592} & \numprint{254734260} \\
$80$ & \numprint{219437} & \numprint{128} & \numprint{772745513} & \numprint{256} & \numprint{772745513} \\
$82$ & \numprint{245030} & \numprint{432} & \numprint{680747757} & \numprint{864} & \numprint{680747757} \\
$76$ & \numprint{355554} & \numprint{24} & \numprint{1549616491} & \numprint{48} & \numprint{1549616491} \\\bottomrule
\end{tabular}
\end{table}

\subsection{Perfect forms only connected via high-incidence cases}
Recall that most of the computational cost was spent exploring the neighbours of those perfect forms with a large kissing number.
Almost all perfect forms can be found without this part, because with a few exceptions all perfect forms are connected with our starting list via low-incidence cases.
Here we discuss the few exceptions to this, i.e., those perfect forms that were only connected via high-incidence forms.

\begin{figure}
\begin{tikzpicture}[scale=0.9]

  \node (Q72) at (-2,0) {$Q_{72,D_9}$};
  \node (QA9) at (-2,-2) {$Q_{45,A_9}$};

  \begin{scope}[shift={(-2,0)}]
  \draw[thick, dashed] (Q72) -- (-.5,.7);
  \draw[thick, dashed] (Q72) -- (-.25,.85);
  \draw[thick, dashed] (Q72) -- (0,.9);
  \draw[thick, dashed] (Q72) -- (.25,.85);
  \draw[thick, dashed] (Q72) -- (.5,.7); 
  \end{scope}

  \node (Q74) at (0,0) {$Q_{74,256}$};
  \node (Q1440) at (0,-2) {$Q_{45,1440}$};

  \draw[thick, dashed] (Q74) -- (-.5,.7);
  \draw[thick, dashed] (Q74) -- (-.25,.85);
  \draw[thick, dashed] (Q74) -- (0,.9);
  \draw[thick, dashed] (Q74) -- (.25,.85);
  \draw[thick, dashed] (Q74) -- (.5,.7);

  \begin{scope}[shift={(5,0)}]
  \node (Q2880) at (0,0) {$Q_{90,\kappalat_9}$};
  \node (Q7680) at (-3,-2) {$Q_{45,7680}$};
  \node (Q1536) at (-1,-2) {$Q_{45,1536}$};
  \node (Q2304) at (1,-2) {$Q_{45,2304}$};
  \node (Q11520) at (3,-2) {$Q_{45,11520}$};

  \node (Q161280) at (3,-4) {$Q_{45,161280}$};
  \node (QA95) at (1,-4) {$Q_{45,\Alat_9^5}$};

  \draw[thick, dashed] (Q2880) -- (-.5,.7);
  \draw[thick, dashed] (Q2880) -- (-.25,.85);
  \draw[thick, dashed] (Q2880) -- (0,.9);
  \draw[thick, dashed] (Q2880) -- (.25,.85);
  \draw[thick, dashed] (Q2880) -- (.5,.7);
  \end{scope}

  \begin{scope}[shift={(10,0)}]
  \node (Q136) at (0,0) {$Q_{136,\Lambdalat_9}$};
  \node (QA92) at (0,-2) {$Q_{45,\Alat_9^2}$};

  \draw[thick, dashed] (Q136) -- (-.5,.7);
  \draw[thick, dashed] (Q136) -- (-.25,.85);
  \draw[thick, dashed] (Q136) -- (0,.9);
  \draw[thick, dashed] (Q136) -- (.25,.85);
  \draw[thick, dashed] (Q136) -- (.5,.7);
  \end{scope}

  \draw[thick] (Q72) -- (QA9);

  \draw[thick] (Q74) -- (Q1440);
  \draw[thick] (Q2880) -- (Q11520);
  \draw[thick] (Q2880) -- (Q2304);
  \draw[thick] (Q2880) -- (Q7680);
  \draw[thick] (Q2880) -- (Q1536);

  \draw[thick] (Q136) -- (Q11520);
  \draw[thick] (Q136) -- (Q2304);
  \draw[thick] (Q136) -- (Q1536);
  \draw[thick] (Q136) -- (QA92);
  \draw[thick] (Q136) -- (Q161280);

  \draw[thick] (Q11520) -- (Q2304);
  \draw[thick] (Q2304) -- (Q1536);
  \draw[thick] (Q1536) -- (Q7680);

  \draw[thick] (Q11520) -- (Q161280);
  \draw[thick] (Q161280) -- (QA95);

\end{tikzpicture}
\caption{Part of Voronoi graph showing all perfect forms that are only connected via high-incidence perfect forms with kissing number at least $2 \cdot 61$. $Q_{a,b}$ indicates a perfect form with kissing number $2a$ and $b$ the size of its automorphism group or a label indicating a specific known lattice. Their properties are listed in~\cref{tab:special_forms}.}
\label{fig:highincidencegraph}
\end{figure}
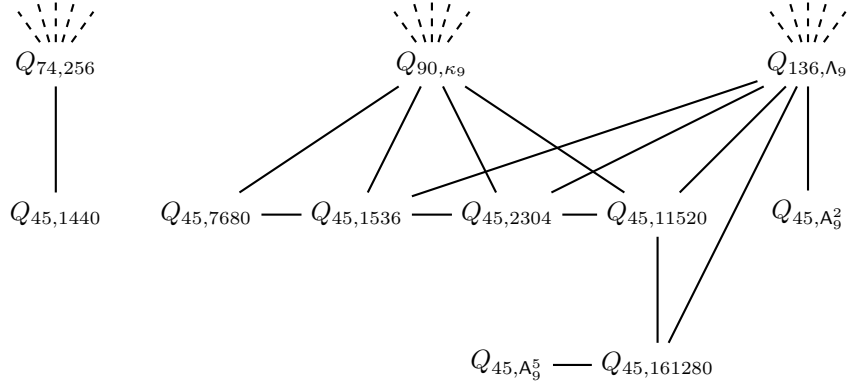

We consider all forms that were not found yet after incrementally exploring all forms until a kissing number of $2 \cdot 60$.
Concurrent to this work, such a computation was also performed by Gregory Minton~\cite{Minton2025} up to a kissing number of $2 \cdot 70$, during which he found \numprint{2237251031} perfect forms.

There are only four perfect forms with a large kissing number that lead to the remaining nine perfect forms, namely forms that we denote by $Q_{72,D_9}, Q_{74,256}, Q_{90,\kappalat_9}$ and $Q_{136,\Lambdalat_9}$ with a half-kissing number of respectively $72, 74, 90$ and $136$. The first one $Q_{72,D_9}$ and its singularly-connected neighbour $Q_{45,A_9}$ correspond to the $D_9$ and $A_9$ root lattices respectively. The latter two correspond to the lattices $\kappalat_9$~\cite{plesken1993constructing} and the laminated lattice $\Lambdalat_9$~\cite{chaundy1946arithmetic,conway1982laminated} respectively, both sections of the Leech lattice $\Lambda_{24}$~\cite{leech1967notes}. For our exploration the root lattice $A_9$ was already part of the starting set, so technically we did not find it at a later stage. 

The new perfect forms and their connections in Voronoi's graph are presented in~\cref{fig:highincidencegraph}.
Being only connected via these high-incidence perfect forms already makes them interesting, but in fact quite some of these lattices have other special properties.
All these new perfect forms have the minimal possible kissing number of $2 \cdot 45$, but all have a large automorphism group which is typically not the case for such low-incidence forms.
The perfect form $Q_{45,\Alat_9^2}$ is the unique perfect form that is only connected to the perfect lattice $\Lambdalat_9$, and corresponds to Coxeter's lattice $\Alat_9^2$~\cite{coxeter1951extreme}. We will see later that this lattice is the only $9$-dimensional perfect lattice whose minimal vectors do not span the lattice.
Then there is the perfect form $Q_{45,\Alat_9^5}$~\cite{coxeter1951extreme,martinet2004generalization}, dual to $Q_{45,\Alat_9^2}$, which is only connected in the Voronoi graph via two and three steps with $Q_{90,\kappalat_9}$ and $Q_{136,\Lambdalat_9}$ respectively. We will show later that this lattice is in fact the only so-called hollow perfect lattice in dimension $9$.
The perfect forms $Q_{45,\Alat_9^5}, Q_{45, \Alat_9^2}$ and $Q_{45,7680}$ all satisfy $\gamma'(Q)^2 = \lambda_1(Q) \cdot \lambda_1(Q^{-1}) = \frac{16}{5}$, the maximum known value for the Berg\'e-Martinet invariant $\gamma'(Q)$ in dimension $9$~\cite{berge1989probleme}. Along with the perfect form $Q_{45,1440}$, these $4$ forms are both extreme and dual-extreme, i.e., they achieve a local-maximum for the Berg\'e-Martinet invariant. Furthermore, the perfect form $Q_{45,161280}$ achieves a rather high value of $\gamma'(Q_{45,161280}) = 37/12$, but is not locally-optimal, due to its better neighbour $Q_{45,\Alat_9^5}$.

We were not able to link the perfect forms $Q_{45,1536}, Q_{45,2304}, Q_{45,11520}$ and $Q_{45,161280}$ to any previously known lattices in the literature.

\begin{table}
\caption{Some special perfect forms and their properties. Their relations in the Voronoi graph are shown in~\cref{fig:highincidencegraph}.}
\label{tab:special_forms}
\begin{tabular}{rrrrrrrr}\toprule
Form & $\frac{1}{2}|\Min(Q)|$ & $|\Aut{}|$ & $\lambda_1$ & $\lambda_1^{\text{dual}}$ & $\det(Q)$ & $\gamma(Q)$ & $\gamma'(Q)$ \\ \midrule
$Q_{45,A_9}$ & \numprint{45} & \numprint{7257600} & \numprint{2} & $9/10$ & $2 \cdot 5$ & 1.549 & $9/5$ \\\midrule
$Q_{72,D_9}$ & \numprint{72} & \numprint{185794560} & \numprint{2} & $1$ & $2^{2}$ & 1.714 & $2$ \\\midrule
$Q_{74,256}$ & \numprint{74} & \numprint{256} & \numprint{4} & $2/3$ & $2^{7} \cdot 3^{2}$ & 1.828 & $8/3$ \\\midrule
$Q_{90,\kappalat_9}$ & \numprint{90} & \numprint{2880} & \numprint{6} & $17/44$ & $2^{2} \cdot 3^{6} \cdot 11$ & 1.894 & $51/22$ \\\midrule
$Q_{136,\Lambdalat_9}$ & \numprint{136} & \numprint{10321920} & \numprint{4} & $1/2$ & $2^{9}$ & 2.0 & $2$ \\\midrule
$Q_{45,1440}$ & \numprint{45} & \numprint{1440} & \numprint{4} & $3/5$ & $2^{9} \cdot 5$ & 1.673 & $12/5$ \\\midrule
$Q_{45,\Alat_9^2}$ & \numprint{45} & \numprint{7257600} & \numprint{4} & $4/5$ & $2^{8} \cdot 5$ & 1.806 & $16/5$ \\\midrule
$Q_{45,11520}$ & \numprint{45} & \numprint{11520} & \numprint{12} & $1/4$ & $2^{6} \cdot 3^{3} \cdot 7^{5}$ & 1.778 & $3$ \\\midrule
$Q_{45,2304}$ & \numprint{45} & \numprint{2304} & \numprint{14} & $20/91$ & $2^{11} \cdot 7^{3} \cdot 13^{2}$ & 1.774 & $40/13$ \\\midrule
$Q_{45,7680}$ & \numprint{45} & \numprint{7680} & \numprint{6} & $8/15$ & $2 \cdot 3^{5} \cdot 5^{3}$ & 1.765 & $16/5$ \\\midrule
$Q_{45,1536}$ & \numprint{45} & \numprint{1536} & \numprint{16} & $11/56$ & $2^{17} \cdot 3^{2} \cdot 7^{3}$ & 1.769 & $22/7$ \\\midrule
$Q_{45,161280}$ & \numprint{45} & \numprint{161280} & \numprint{10} & $37/120$ & $2^{9} \cdot 3^{7} \cdot 5$ & 1.779 & $37/12$ \\\midrule
$Q_{45,\Alat_9^5}$ & \numprint{45} & \numprint{7257600} & \numprint{8} & $2/5$ & $2 \cdot 5^{8}$ & 1.771 & $16/5$ \\\bottomrule
\end{tabular}
\end{table}

\subsection{Possible kissing numbers in dimension $9$}

\begin{figure}
\includegraphics[width=0.9\textwidth]{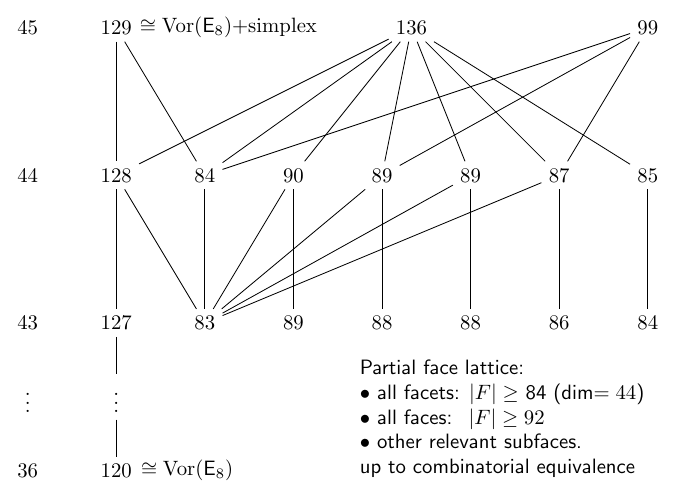}
\caption{Part of face (poset) lattice of $\Vor(Q_{99}), \Vor(Q_{129})$ and $\Vor(Q_{\Lambdalat_9})$, with combinatorially equivalent faces merged. The nodes are labelled with the incidence of the face. On the left the dimension of the face is denoted.}
\label{fig:face_lattice_high_incidence}
\end{figure}

As a by-product of our computation using the recursive dual description method we can also prove~\cref{thm:poss_kissing_numbers} which completely classifies the possible kissing numbers in dimension $9$.

\begin{proof}[{Proof of \cref{thm:poss_kissing_numbers}}]
Let $Q \in \S_{>0}^9$ be any $9$-dimensional PQF and $\Min(Q)$ its set of minimum vectors.
The claim of~\cref{thm:poss_kissing_numbers} is that $|\Min(Q)|$ lies in the set $2\cdot \{ 1, \ldots, 91, 99, 120, \ldots, 129, 136 \}$ and that all these values are attained.

By definition $Q$ lies on the face of the Rhyskov polyhedra $\P_{\lambda_1(Q)}^9$ defined by the facets corresponding to $\Min(Q)/\{ \pm \}$. The incidence of the face is $\frac{1}{2}|\Min(Q)|$ and thus corresponds to half the kissing number of $Q$. This face is either a vertex, i.e., $Q$ is a perfect form, or the face contains a vertex corresponding to a perfect form $Q'$. In the first case we know by our full classification that $|\Min(Q)| \in \{ 45, \ldots, 82, 84, 90, 91, 99, 129, 136 \}$. Furthermore, we know that $1, \ldots, 44$ are attained in dimension $8$~\cite{PerfectDim8} which by a direct-sum construction can be lifted to dimension $9$. In the second case the face can be identified with a non-trivial face of the tangent cone $\P(Q')$ at $Q'$.

To study the possible kissing numbers we thus have to investigate the face lattice of the tangent cones $\P(Q')$ for all perfect forms $Q' \in \S_{>0}^9$.
For example, incidences $83, \ldots 90$ are all attained by subfaces of $\P(Q_{\Lambdalat_9})$.
Because the incidence of a non-trivial face of $\P(Q')$ is always bounded by $\frac{1}{2}|\Min(Q')|-1$ we now only have to check the perfect forms with a kissing number of at least $93$, leaving only three cases: $Q_{99}, Q_{129}$ and $Q_{\Lambdalat_9}$.

For ease of explanation we switch to the equivalent dual setting and consider the faces of Voronoi domains.
The relevant parts of the face lattice of $\Vor(Q_{99}), \Vor(Q_{129})$ and $\Vor(Q_{\Lambdalat_9})$ are shown in \cref{fig:face_lattice_high_incidence}.
For $\Vor(Q_{99})$ our dual description computation shows that the its largest facet has incidence $89$, and thus there are no non-trivial faces with an incidence larger than that.

In~\cite{vanWoerdenMasterThesis} it was shown that $\Vor(Q_{129})$ can be decomposed into the Voronoi domain $\Vor(Q_{\Elat_8})$ of the $8$-dimensional root lattice $\Elat_8$ with $120$ extreme rays and a linearly independent simplicial cone with $9$ extreme rays. As a result, all faces of $\Vor(Q_{129})$ can be described as a sum of a face of $\Vor(Q_{\Elat_8})$ and a face of the simplicial cone. If we consider all faces containing $\Vor(Q_{\Elat_8})$ we thus obtain non-trivial faces of incidence $120+k$ for $k = 0, \ldots, 8$. The largest non-trivial face of $\Vor(Q_{\Elat_8})$ has incidence $75$, therefore all other faces of $\Vor(Q_{129})$ not containing $\Vor(Q_{\Elat_8})$ have incidence at most $75+9 = 84$.

Lastly, for $\Vor(Q_{\Lambdalat_9})$ we have one facet of incidence $128$, and the second largest facet has incidence $90$. We can thus focus on the subfaces of the largest facet. This facet is combinatorially isomorphic to the facet of $\P(Q_{129})$ with the same incidence. We have thus already treated this case.

We have thus shown that the only face incidence numbers occurring in the Rsyshkov Polyhedra are $\{1, \ldots,  91, 99, 120, \ldots, 129, 136 \}$ which concludes the proof.
\end{proof}

\subsection{Basis of minimal vectors}
Up to dimension $7$ all perfect lattices have a basis of minimum vectors, which was shown theoretically by Cs{\'o}ka~\cite{csoka1987there} and confirmed by the classifications of perfect forms in those dimensions.
Martinet~\cite[Conjecture 6.6.7]{martinet} conjectured that the same was true in dimension $8$, which was later confirmed by the classification~\cite{PerfectDim8}.
He also gave a counterexample for dimension $9$, namely Coxeter's lattice $\Alat_9^2$~\cite{coxeter1951extreme}.
Our classification shows that this is in fact the only case.
\begin{theorem}
  With the exception of Coxeter's lattice $\Alat_9^2$, all perfect lattices in dimension $9$ have a basis of minimum vectors.
\end{theorem}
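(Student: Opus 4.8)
The plan is to reduce the statement to a verification over the complete list of $\numprint{2237251040}$ perfect forms produced by Voronoi's algorithm in this work. The one conceptual ingredient is the elementary observation that if a lattice has a basis of minimal vectors then $\spa_\ZZ(\Min(Q)) = \ZZ^9$ for the corresponding PQF $Q$; combined with the fact already recorded in the discussion of the special perfect forms in \cref{tab:special_forms} --- namely that Coxeter's lattice $\Alat_9^2$ is the unique $9$-dimensional perfect lattice whose minimal vectors do not span the lattice --- this shows that $Q_{45,\Alat_9^2}$ is the \emph{only} possible exception, and for it there is indeed no basis of minimal vectors. What remains is to exhibit, for every other perfect form $Q$, an explicit set of $9$ vectors in $\Min(Q)$ of determinant $\pm 1$.

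To do this I would run, as a single parallel pass over the $64$ \textbf{netCDF} shards of the classification \cite{DutourSikiric2025Complete}, a routine that searches $\Min(Q)$ for such a unimodular $9$-tuple. Since $|\Min(Q)| \le 2 \cdot 136$ is small, this is cheap: one can process the minimal vectors greedily, maintaining a generating set $B$ of a sublattice $M = \spa_\ZZ(B) \subseteq \ZZ^9$ together with its Hermite normal form, retaining a new vector $v$ (with a Steinitz-type exchange of one current generator for $v$) whenever it strictly enlarges $M$ --- by raising its rank or lowering the index $[\ZZ^9 : M]$ --- and stopping as soon as $|B| = 9$ and $\det(B) = \pm 1$; should the greedy pass fail to reach index $1$, a short exhaustive search over the few minimal vectors settles the question. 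The theorem is then the assertion that this procedure succeeds for every perfect form other than $Q_{45,\Alat_9^2}$.

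The point worth stressing --- and the only place where the theorem is not a formality --- is that $\spa_\ZZ(\Min(Q)) = \ZZ^9$ does \emph{not} by itself guarantee that $\Min(Q)$ contains a basis: a generating set of $\ZZ^9$ need not contain a basis (equivalently, the $\gcd$ of the maximal minors of the matrix of minimal vectors may equal $1$ without any single minor being $\pm 1$). So the spanning statement does not imply the theorem; its content is precisely that this potential obstruction never materialises in dimension $9$ outside of $\Alat_9^2$, and this can only be certified by the explicit search above carried out on every form. As a sanity check I would confirm that the output is consistent with Cs\'oka's theorem in dimensions up to $7$ \cite{csoka1987there}, with the classification in dimension $8$ \cite{PerfectDim8}, and with Martinet's original identification of $\Alat_9^2$ as a counterexample \cite{coxeter1951extreme,martinet}; any deviation would point to a bug in the enumeration or in the basis-search code rather than to a new phenomenon.
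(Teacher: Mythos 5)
Your proposal is correct and essentially reproduces the paper's approach: the theorem is established purely by direct computational inspection of the complete list of perfect forms, and your algorithmic elaboration (a greedy, Steinitz-exchange search for a unimodular $9$-tuple inside $\Min(Q)$, with a fallback exhaustive search) is a sensible way to carry out that check. One small added value of your write-up is that it states, more explicitly than the paper does, that $\spa_\ZZ(\Min(Q)) = \ZZ^9$ does not formally imply the existence of a basis of minimal vectors, so the theorem genuinely requires its own pass over the data rather than following from the spanning fact already recorded in \cref{sub:can_lattices}; the paper's remark after the theorem (that $\Alat_9^2$ is not even generated by its minimal vectors, hence ``generated $\Rightarrow$ basis'' in dimension~$9$) is a consequence of the computation, not a shortcut to the theorem.
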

Coxeter's lattice $\Alat_9^2$, is not even generated by its minimum vectors. Hence, every perfect lattice in dimension $9$ that is generated by its minimum vectors also has a basis of minimum vectors.

\subsection{Hollow perfect lattices}

A lattice of dimension $n$ is \emph{hollow}~\cite{martinet2004generalization} if it does not have any perfect $r$-dimensional sections with the same minimum for $1 < r < n$.
For $n \geq 10$ perfect hollow lattices with an odd minimum always exist~\cite{martinet2004generalization}. For $3 \leq n \leq 8$ (the scaling of) $\Elat_7^*$ is the unique perfect hollow lattice. This left open the case for dimension $9$.
\begin{theorem}
  All $9$-dimensional perfect lattices contain the perfect root lattice $\Alat_2$ as a section with the same first minimum, except Coxeter's lattice $\Alat_9^5$. Coxeter's lattice $\Alat_9^5$ is the unique hollow perfect lattice in dimension $9$. In particular, there are no hollow (in the sense of ~\cite{martinet2004generalization}) perfect lattices with an odd minimum in dimension $9$.
\end{theorem}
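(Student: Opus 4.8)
The plan is to reduce the first assertion, for every perfect lattice other than $\Alat_9^5$, to an easily checked property of its minimal vectors, and to settle $\Alat_9^5$ itself by a small dedicated search. First I would record the following criterion. Let $Q \in \S_{>0}^9$ be perfect with $\lambda := \lambda_1(Q)$. If there are two distinct pairs $\pm x, \pm y \in \Min(Q)/\{\pm 1\}$ with $\langle x, y\rangle_Q = \pm\tfrac12\lambda$, then after possibly replacing $y$ by $-y$ the vector $x - y$ is again minimal, so the section $\ZZ^9 \cap (\RR x + \RR y)$ contains the three pairs $\pm x, \pm y, \pm(x-y)$; since $\{xx^t, yy^t, (x-y)(x-y)^t\}$ spans $\S^2$, this section is a $2$-dimensional perfect lattice of first minimum $\lambda$, hence a copy of $\Alat_2$. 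Conversely, an $\Alat_2$-section of $Q$ with first minimum $\lambda$ supplies such a pair, since the minimal vectors of $\Alat_2$ form a regular hexagon. Thus $Q$ contains $\Alat_2$ as a section with the same first minimum if and only if the Gram matrix of its minimal vectors has an off-diagonal entry equal to $\pm\tfrac12\lambda_1(Q)$; note this already forces $\lambda_1(Q)$ to be even.

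Next I would run this test over the complete list of \numprint{2237251040} perfect forms: for each form take its minimal vectors, form their Gram matrix, and search for an off-diagonal entry equal to $\pm\tfrac12\lambda_1(Q)$. The expected outcome is that the criterion holds for every perfect form except $Q_{45,\Alat_9^5}$. By the criterion, every $9$-dimensional perfect lattice other than Coxeter's lattice $\Alat_9^5$ contains $\Alat_2$ as a section with the same first minimum, and since $\Alat_2$ is a perfect lattice of dimension $2$ with $1 < 2 < 9$, none of these lattices is hollow.

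It then remains to show that $\Alat_9^5$ is hollow, i.e.\ has no perfect $r$-dimensional section with the same first minimum for $2 \le r \le 8$. The case $r = 2$ is already covered, because $\Alat_2$ is the unique perfect lattice of dimension $2$. For $3 \le r \le 8$ I would use that $\Alat_9^5$ has only $45$ pairs of minimal vectors and an automorphism group of order \numprint{7257600}: enumerate, up to $\Aut(\Alat_9^5)$, the subspaces $V$ spanned by subsets of the minimal vectors, and for each compute $S_V := \Min(\Alat_9^5) \cap V$ and the rank of $\{vv^t : v \in S_V\}$ in the $\binom{r+1}{2}$-dimensional space of symmetric forms on $V$; a perfect $r$-section would require this rank to be maximal. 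The expected outcome is that it never is, so $\Alat_9^5$ has no perfect section of any dimension $1 < r < 9$ with the same minimum and is therefore hollow --- and, with the previous step, the unique such lattice in dimension $9$. The concluding remark is then immediate: by \cref{tab:special_forms} the primitive integral scaling of $\Alat_9^5$ has first minimum $8$, hence every integral form similar to it has even first minimum, so no hollow perfect lattice in dimension $9$ has an odd minimum.

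The main obstacle is computational, not conceptual: the sweep over billions of forms is a data-handling task, and the subspace search for $\Alat_9^5$ must be organised so as to stay feasible and --- more delicately --- correct. In particular one has to intersect with the \emph{full} subspace $V$ before testing the perfection rank, rather than only with the chosen spanning subset, or a genuine perfect section could be missed; and the minimal-vector criterion must be applied in the correct integral scaling so that the comparison $\langle x, y\rangle_Q = \pm\tfrac12\lambda_1(Q)$ is meaningful over $\ZZ$.
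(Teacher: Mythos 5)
Your first two steps match the paper exactly: the $\Alat_2$-section criterion (a pair $\pm x,\pm y\in\Min(Q)$ with $|\langle x,y\rangle_Q|=\tfrac12\lambda_1(Q)$), the sweep over all $\numprint{2237251040}$ perfect forms, and the final parity observation from the scale $8$ of $\Alat_9^5$ are all what the paper does. Where you diverge is in establishing hollowness of $\Alat_9^5$ for $3\le r\le 8$. You propose a direct, self-contained combinatorial search: enumerate subspaces $V$ spanned by subsets of the $45$ minimal-vector pairs up to $\Aut(\Alat_9^5)$, form $S_V=\Min(\Alat_9^5)\cap V$, and check that $\{vv^t:v\in S_V\}$ never has full rank in $\S^r$. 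This is sound (a perfect section is spanned by its minimal vectors, which lie in $\Min(\Alat_9^5)\cap V$, and you correctly flag the need to intersect with the full $V$ rather than the chosen spanning set), and with $45$ pairs and the large automorphism group it is feasible; but it is a genuinely new computation. The paper instead leans on the existing classifications in dimensions $2$ through $8$: it cites the fact that $\Elat_7^*$ is the \emph{unique} perfect lattice of dimension $2\le r<9$ that does not contain $\Alat_2$ as a section with the same minimum. Hence any perfect $r$-section of $\Alat_9^5$ with the same minimum would either contain $\Alat_2$ (already excluded for $\Alat_9^5$) or be similar to $\Elat_7^*$, which is ruled out by a divisibility obstruction: $\Elat_7^*$ has primitive integral minimum $3$ while the integral section of $\Alat_9^5$ would have minimum $8$, and $3\nmid 8$. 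Your route buys independence from the lower-dimensional hollowness classification at the price of an extra (if modest) search; the paper's route buys brevity at the price of citing a structural fact about perfect lattices in dimensions $\le 8$.
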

\begin{proof}
  Having a section with $\Alat_2$ with the same minimum is easily verified by computing if any pair of minimum vectors $x,y$ satisfies $\norm{x}^2 = \norm{y}^2 = 2 |\langle x,y \rangle|$.
  This is the case for all perfect lattices in dimension $9$ except for Coxeter's lattice $\Alat_9^5$, which therefore is the only remaining candidate for being hollow.
  Furthermore, $\Elat_7^*$ cannot occur as a section of $\Alat_9^5$ as their minima are $3$ and $8$ respectively (when scaled to an integral lattice). Because $\Elat_7^*$ is the only perfect lattice in dimension $2 \leq r < 9$ not containing $\Alat_2$ as a section with the same minimum we can conclude that $\Alat_9^5$ is hollow.
\end{proof}
Interestingly, Coxeter's lattice $\Alat_9^5$ is dual to the other remarkable perfect lattice $\Alat_9^2$, which is not generated by its minimum vectors. Furthermore, this matches the pattern that the perfect lattice $\Elat_7^* \cong (\Alat_7^2)^*$ is hollow.

\subsection{The Berg\'e-Martinet invariant}
The Bergé-Martinet invariant of a lattice $\lat$ is defined by $\gamma'(\lat) = \lambda_1(\lat) \cdot \lambda_1(\lat^*)$~\cite{berge1989probleme}.
One could interpret the Bergé-Martinet invariant as some sort of average packing density over the primal and dual lattice.
This invariant has a perfectness and eutaxy theory behind it similar to Hermite's invariant, which we will shortly recall. For more details see~\cite[Section 3.8]{martinet}.
Lattices that are a local maximum of $\gamma'$ are called \emph{dual-extreme}. Lattices represented by a form $Q$ are dual extreme if and only if they are dual-perfect ($\{ Qxx^tQ : x \in \Min(Q) \} \cup \{ yy^t : y \in \Min(Q^{-1}) \}$ span $\S^n$) and dual-eutactic (there exists a eutaxy relation $Q \left(\sum_{\pm x \in \Min(Q)} \lambda_x xx^t \right) Q = \sum_{\pm y \in \Min(Q^{-1})} \lambda_y yy^t$ where $\lambda_x, \lambda_y \in \RR_{>0}$ are strictly positive). Perfect lattices are trivially dual-perfect, and thus they are potentially a good source for dual-extreme forms.
\begin{theorem}
  There exist $18$ dual-extreme $9$-dimensional perfect non-similar lattices. All having $\gamma'(\lat)^2 \leq \frac{16}{5}$ with equality attained by $Q_{45,7680}$ and by the dual pair $(Q_{45,\Alat_9^2}, Q_{45,\Alat_9^5})$.
\end{theorem}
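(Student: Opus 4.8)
The plan is to exploit the complete classification of the \numprint{2237251040} non-similar perfect forms in dimension $9$ and to test dual-extremality on each of them in turn. Recall from the discussion preceding the theorem that a lattice with form $Q$ is dual-extreme if and only if it is dual-perfect and dual-eutactic, and that every perfect form is automatically dual-perfect. Hence the problem reduces to deciding, for each perfect form $Q$ in our list, whether there exist strictly positive scalars $\lambda_x$ for $\pm x \in \Min(Q)$ and $\lambda_y$ for $\pm y \in \Min(Q^{-1})$ with $Q\bigl(\sum_{\pm x \in \Min(Q)} \lambda_x\, xx^t\bigr)Q = \sum_{\pm y \in \Min(Q^{-1})} \lambda_y\, yy^t$.

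For a fixed perfect form $Q$, scaled to be integral, I would first compute $Q^{-1}$ exactly and enumerate the minimal vectors $\Min(Q^{-1})$ of the dual lattice by a short-vector enumeration. Rewriting the dual-eutaxy relation as $\sum_{\pm x} \lambda_x\,(Qxx^tQ) - \sum_{\pm y} \lambda_y\, yy^t = 0$, it becomes a homogeneous linear system in the unknowns $(\lambda_x,\lambda_y)$ valued in the $45$-dimensional space $\S^9$. Exactly as in the eutaxy test used to produce \cref{tab:eutaxy}, one then solves a linear program maximising a slack $\mu$ subject to this equality together with $\lambda_x \geq \mu$ and $\lambda_y \geq \mu$: if the optimum is bounded away from $0$ the form is dual-eutactic and hence dual-extreme, while if the floating-point optimum is close to $0$ one re-runs the program in exact rational arithmetic to decide. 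Sweeping this test over the entire classification yields precisely the $18$ dual-extreme perfect lattices; as a consistency check, the four among them that are additionally eutactic recover exactly the forms already flagged as both extreme and dual-extreme, namely $Q_{45,1440}$, $Q_{45,7680}$ and the Coxeter lattices $\Alat_9^2,\Alat_9^5$.

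It remains to bound the Bergé--Martinet invariant over these $18$ forms. For each one computes $\gamma'(Q)^2 = \lambda_1(Q)\cdot\lambda_1(Q^{-1})$, an explicit positive rational since $Q$ is integral, and reads off that the largest value is $\tfrac{16}{5}$, attained exactly by $Q_{45,7680}$ and by the dual pair $(Q_{45,\Alat_9^2},Q_{45,\Alat_9^5})$, consistently with the data in \cref{tab:special_forms}. The real obstacle is not the linear algebra of any single instance but the scale of the sweep: one must form $Q^{-1}$ and, more expensively, enumerate $\Min(Q^{-1})$ for all \numprint{2237251040} perfect forms, and dual lattices can have a large scale and be poorly conditioned, so the dual short-vector enumeration is the bottleneck. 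In practice one prunes most forms with cheap invariants before invoking the full linear program, leaving only a handful of borderline instances that require the exact-arithmetic fallback.
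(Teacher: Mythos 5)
Your proposal matches the paper's implicit approach: the paper does not give an explicit proof, but the theorem rests on sweeping the complete classification of perfect forms, using that perfect implies dual-perfect, and testing dual-eutaxy via a linear program analogous to the eutaxy LP used for \cref{tab:eutaxy}. Your outline is correct and essentially identical; the only minor caveat is that the LP needs a normalization (e.g.\ fixing the sum of the $\lambda$'s) to avoid unboundedness, a detail you (and the paper) leave implicit.
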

The three perfect cases attaining $\frac{16}{5}$ were already known by Martinet~\cite{martinet} and Baril~\cite{baril1996autour}.

\bibliographystyle{aomalpha}
\bibliography{LatticeRef}

\appendix
\newpage
\section{}

\begin{table}[h]
\centering
\caption{The number of non-similar forms by the order $|\Aut(Q)|$ of their automorphism group.}\label{tab:autQ}
\begin{tabular}{llllllll} \toprule
$|\text{aut}(Q)|$ & \# & $|\text{aut}(Q)|$ & \# & $|\text{aut}(Q)|$ & \# & $|\text{aut}(Q)|$ & \# \\ \midrule
$2$ & $2165405948$ & $80$ & $10$ & $512$ & $1$ & $5760$ & $3$ \\
$4$ & $70134203$ & $84$ & $1$ & $576$ & $11$ & $6144$ & $1$ \\
$6$ & $495$ & $96$ & $564$ & $648$ & $1$ & $7680$ & $3$ \\
$8$ & $1411717$ & $108$ & $1$ & $672$ & $2$ & $8640$ & $1$ \\
$10$ & $4$ & $120$ & $3$ & $768$ & $12$ & $9216$ & $1$ \\
$12$ & $232514$ & $128$ & $34$ & $864$ & $6$ & $10080$ & $1$ \\
$16$ & $29670$ & $144$ & $105$ & $960$ & $7$ & $11520$ & $1$ \\
$18$ & $2$ & $160$ & $5$ & $1152$ & $3$ & $15552$ & $1$ \\
$20$ & $111$ & $168$ & $5$ & $1440$ & $6$ & $17280$ & $2$ \\
$24$ & $30189$ & $192$ & $90$ & $1536$ & $5$ & $18432$ & $1$ \\
$28$ & $6$ & $216$ & $1$ & $1728$ & $1$ & $36864$ & $1$ \\
$32$ & $2252$ & $240$ & $31$ & $1920$ & $2$ & $62208$ & $1$ \\
$36$ & $31$ & $256$ & $10$ & $2304$ & $9$ & $80640$ & $1$ \\
$40$ & $54$ & $288$ & $64$ & $2592$ & $3$ & $161280$ & $3$ \\
$48$ & $2268$ & $384$ & $26$ & $2880$ & $4$ & $725760$ & $1$ \\
$56$ & $9$ & $400$ & $1$ & $3072$ & $1$ & $7257600$ & $3$ \\
$64$ & $226$ & $432$ & $2$ & $3456$ & $2$ & $10321920$ & $1$ \\
$72$ & $257$ & $480$ & $25$ & $4608$ & $3$ & $185794560$ & $1$ \\ \bottomrule
\end{tabular}
\end{table}

\begin{table}
\centering
\caption{The number of non-similar forms by their scale. The scale of a PQF $Q \in \S_{>0}^d$ is the first-minimum $\lambda_1(s \cdot Q)$ with $s > 0$ minimal such that $s \cdot Q \in \S_{>0}^d$ is integral.}\label{tab:scaleQ}
\begin{tabular}{llllllll}\toprule
scale & \# & scale & \# & scale & \# & scale & \# \\ \midrule
$2$ & $2$ & $42$ & $1563418$ & $82$ & $946$ & $122$ & $8$ \\
$4$ & $998$ & $44$ & $1043502$ & $84$ & $1167$ & $124$ & $13$ \\
$6$ & $1001342$ & $46$ & $605352$ & $86$ & $483$ & $126$ & $6$ \\
$8$ & $25561692$ & $48$ & $445426$ & $88$ & $566$ & $128$ & $11$ \\
$10$ & $119744270$ & $50$ & $252229$ & $90$ & $349$ & $130$ & $3$ \\
$12$ & $253889032$ & $52$ & $187457$ & $92$ & $329$ & $132$ & $13$ \\
$14$ & $348860535$ & $54$ & $115379$ & $94$ & $182$ & $134$ & $3$ \\
$16$ & $370176078$ & $56$ & $86019$ & $96$ & $288$ & $136$ & $1$ \\
$18$ & $329241468$ & $58$ & $49839$ & $98$ & $111$ & $138$ & $3$ \\
$20$ & $261023035$ & $60$ & $43670$ & $100$ & $114$ & $140$ & $3$ \\
$22$ & $187497727$ & $62$ & $23639$ & $102$ & $79$ & $144$ & $5$ \\
$24$ & $127651488$ & $64$ & $20404$ & $104$ & $74$ & $146$ & $1$ \\
$26$ & $81448877$ & $66$ & $12779$ & $106$ & $37$ & $148$ & $2$ \\
$28$ & $51530332$ & $68$ & $10298$ & $108$ & $52$ & $150$ & $2$ \\
$30$ & $30986740$ & $70$ & $6061$ & $110$ & $37$ & $152$ & $1$ \\
$32$ & $19160686$ & $72$ & $6160$ & $112$ & $36$ & $156$ & $2$ \\
$34$ & $11188676$ & $74$ & $3132$ & $114$ & $30$ & $160$ & $2$ \\
$36$ & $7073814$ & $76$ & $2969$ & $116$ & $28$ & $168$ & $1$ \\
$38$ & $4085080$ & $78$ & $1928$ & $118$ & $8$ &  &  \\
$40$ & $2642786$ & $80$ & $1692$ & $120$ & $33$ &  &  \\ \bottomrule
\end{tabular}
\end{table}

\begin{table}
\centering
\caption{The number of non-similar perfect forms by their kissing number and eutaxy status. This list of strongly semi-eutactic perfect forms might be incomplete.}\label{tab:eutaxy}
\footnotesize
\scalebox{1.}[0.88]{
\begin{tabular}{llll}\toprule
$\frac{1}{2}|\Min(Q)|$ & \# perfect & \# semi-eutactic (strong) & \# eutactic (strong) \\ \midrule
$45$ & $1353947672$ & $289$ ($1$) & $986573$ ($5$) \\
$46$ & $471756975$ & $576$ & $1494396$ \\
$47$ & $267588732$ & $830$ & $1597977$ \\
$48$ & $84473357$ & $900$ ($1$) & $1264705$ \\
$49$ & $37278163$ & $891$ & $861126$ \\
$50$ & $13324560$ & $721$ & $522199$ \\
$51$ & $5299974$ & $563$ & $294577$ \\
$52$ & $2009292$ & $380$ & $154287$ \\
$53$ & $903943$ & $253$ & $78352$ \\
$54$ & $366796$ & $163$ & $39911$ \\
$55$ & $155182$ & $81$ & $20708$ \\
$56$ & $78919$ & $50$ & $10762$ \\
$57$ & $31113$ & $45$ & $5532$ \\
$58$ & $17207$ & $35$ & $3092$ \\
$59$ & $8231$ & $14$ & $1759$ \\
$60$ & $4820$ & $13$ & $941$ \\
$61$ & $2244$ & $11$ & $563$ \\
$62$ & $1713$ & $14$ & $378$ \\
$63$ & $641$ & $5$ & $246$ ($1$) \\
$64$ & $634$ & $2$ & $156$ \\
$65$ & $236$ & $2$ & $87$ \\
$66$ & $203$ & $2$ & $75$ \\
$67$ & $172$ & $0$ & $47$ \\
$68$ & $74$ & $0$ & $27$ \\
$69$ & $44$ & $1$ & $22$ \\
$70$ & $42$ & $1$ & $21$ \\
$71$ & $26$ & $0$ & $10$ \\
$72$ & $21$ & $0$ & $17$ ($2$) \\
$73$ & $7$ & $0$ & $4$ \\
$74$ & $3$ & $0$ & $3$ \\
$75$ & $4$ & $0$ & $4$ \\
$76$ & $6$ & $0$ & $3$ \\
$77$ & $1$ & $0$ & $1$ \\
$78$ & $1$ & $0$ & $1$ \\
$79$ & $2$ & $0$ & $1$ \\
$80$ & $12$ & $1$ & $5$ \\
$81$ & $3$ & $0$ & $3$ ($2$) \\
$82$ & $4$ & $0$ & $3$ \\
$84$ & $2$ & $0$ & $2$ \\
$85$ & $2$ & $0$ & $0$ \\
$88$ & $1$ & $0$ & $0$ \\
$90$ & $2$ & $0$ & $2$ \\
$91$ & $1$ & $0$ & $1$ \\
$99$ & $1$ & $0$ & $1$ \\
$129$ & $1$ & $0$ & $1$ \\
$136$ & $1$ & $0$ & $1$ \\ \midrule
Total & $2237251040$ & $5843$ ($2$) & $7338582$ ($10$) \\ \bottomrule
\end{tabular}
}
\end{table}

\end{document}